\crefname{hypothesis}{Hypothesis}{Hypotheses}
\crefname{fact}{Fact}{Facts}
\title{Time-relaxation structure-preserving explicit low-regularity integrators for the nonlinear Schrödinger equation\thanks{Submitted to the editors DATE.
		\funding{The first and third authors are supported by the European Research Council (ERC) under the European Union’s Horizon 2020 research and innovation program (grant agreement No. 850941). The forth author is supported by NSFC 12371403.}}}
\author{Hang Li\thanks{Laboratoire Jacques-Louis Lions, Sorbonne Universit\'e, Paris 75005, France (\email{hang.li.1@sorbonne-universite.fr}, \email{katharina.schratz@sorbonne-universite.fr}).}
	\and Xicui Li\thanks{Corresponding author. Laboratoire Jacques-Louis Lions, Sorbonne Universit\'e, Paris 75005, France; School of Mathematics and Statistics, Xi’an Jiaotong University, Xi’an 710049, People’s Republic of China
		(\email{lixicui@stu.xjtu.edu.cn}).}
	\and Katharina Schratz\footnotemark[2]\and Bin Wang\thanks{School of Mathematics and Statistics, Xi’an Jiaotong University, Xi’an 710049, People’s Republic of China
		(\email{wangbinmaths@xjtu.edu.cn}).}}
\newcommand{\abs}[1]{\lvert#1\rvert}
\def\<{\left\langle}
\def\>{\right\rangle}
\newcommand{\norm}[1]{\left\Vert#1\right\Vert}
\newcommand{\be}{\begin{equation}}
	\newcommand{\ee}{\end{equation}}
\newcommand{\e}{\mathrm{e}}
\begin{document}

\maketitle
\begin{abstract}
We propose and rigorously analyze a novel family of explicit low-regularity exponential integrators for the nonlinear Schrödinger (NLS) equation, based on a time-relaxation framework. The methods combine a resonance-based scheme for the twisted variable with a dynamically adjusted relaxation parameter that guarantees exact mass conservation. Unlike existing symmetric or structure-preserving low-regularity integrators, which are typically implicit and computationally expensive, the proposed methods are fully explicit, mass-conserving, and well-suited for solutions with low regularity. Furthermore, the schemes can be naturally extended to a broad class of evolution equations exhibiting the structure of strongly continuous contraction semigroups. Numerical results demonstrate the accuracy, robustness, and excellent long-time behavior of the methods under low-regularity conditions.
\end{abstract}

\begin{keywords}
Time relaxation, nonlinear Schrödinger equation, structure-preserving, low regularity.
\end{keywords}

\begin{MSCcodes}
65M12, 65M15, 65T50, 35Q55
\end{MSCcodes}

\section{Introduction}
In this work, we focus on the numerical approximation of solutions to the nonlinear Schrödinger (NLS) equation given by:
\begin{equation}\label{general_NLS}
	\begin{cases}
		i \partial_t u(t,\mathbf{x}) +\Delta u(t,\mathbf{x})= \lambda  \abs{u(t,\mathbf{x})}^{2p}u(t,\mathbf{x}), \ \ &(t,\mathbf{x}) \in [0,T] \times \mathbb{T}^d, \\
		u(0,\mathbf{x})= u_0(\mathbf{x}),&\mathbf{x} \in \mathbb{T}^d,
	\end{cases}
\end{equation}
where $\lambda\in \mathbb{R}$, $p, d \in \mathbb{N}_{+}$, and $\mathbb{T}^d = (\mathbb{R}/(2\pi\mathbb{Z}))^d$ is the $d$-dimensional torus. The NLS equation \eqref{general_NLS} admits two fundamental conservation laws:
\paragraph{Mass conservation}
The solution conserves the mass (or the quadratic first integral):
\begin{equation}\label{mass_conservation}
	\mathcal{M}(u(t)) := \int_{\mathbb{T}^d} |u(t,\mathbf{x})|^2 \, \mathrm{d}\mathbf{x} \equiv \int_{\mathbb{T}^d} |u_0(\mathbf{x})|^2 \, \mathrm{d}\mathbf{x} = \mathcal{M}(u_0), \quad t \in [0,T],
\end{equation}
which is equivalent to
\begin{equation*}
	\|u(t)\|_{L^2(\mathbb{T}^d)} \equiv\|u_0\|_{L^2(\mathbb{T}^d)}, \quad t \in [0,T].
\end{equation*}
\paragraph{Energy conservation}
The solution also conserves the Hamiltonian:
\begin{equation}\label{energy_conservation}
	\mathcal{H}(u(t)) := \int_{\mathbb{T}^d} \left[ |\nabla u(t,\mathbf{x})|^2 + \frac{\lambda}{p+1} |u(t,\mathbf{x})|^{2p+2} \right] \, \mathrm{d}\mathbf{x} \equiv \mathcal{H}(u_0), \quad t \in [0,T].
\end{equation}

The NLS equation arises as a fundamental model in numerous areas of physics, such as quantum physics and chemistry, Bose–Einstein condensation (BEC), laser propagation, plasma and particle dynamics~\cite{bao2012mathematical,erdhos2007derivation,sulem1999nonlinear}, and also in nonlinear optics and hydrodynamics~\cite{sulem1999nonlinear,tai1986observation,thomas2012nonlinear}.
Numerically, many classical methods have been developed for solving the NLS equation, including finite difference methods \cite{akrivis1993finite,antoine2013computational,bao2013optimal}, exponential integrators \cite{bao2014uniform,Celledoni2008,hochbruck2010exponential}, and time splitting methods \cite{bao2023improved,bao2003numerical,besse2002order,Lubich2008}. However, when solutions lack sufficient regularity, these classical numerical schemes suffer from loss of stability and accuracy, and may even exhibit order reduction. In 2018, Ostermann \& Schratz~\cite{ostermann_schratz2018low} developed a resonance-based framework for constructing low-regularity exponential integrators applicable to nonsmooth solutions. This foundational work has since inspired the development of higher-order schemes~\cite{bruned_schratz_2022,ostermann2022second}, integrators with even weaker regularity assumptions~\cite{cao2024new,li2021fully,ostermann2022fully}, and extensions to more general boundary settings~\cite{bronsard2023error,bai2022constructive,rousset2021general}. However, none of the aforementioned integrators are structure-preserving, as they disrupt the underlying geometric structure~\cite{Celledoni2008}. Building on the time-reversibility of the equation, Alama Bronsard~\cite{alama2023symmetric} and Feng et al.~\cite{feng_maierhofer_schratz_2023} proposed symmetric low-regularity schemes that approximately preserve mass and energy over long time intervals, while Maierhofer \& Schratz~\cite{Maierhofer2022} developed symplectic integrators that exactly preserve quadratic invariants such as mass. However, in all of the aforementioned works on symmetric low-regularity integrators, introducing symmetry necessarily led to implicit schemes, thereby incurring the additional computational cost of solving a nonlinear system at each time step.
Inspired by the works of Bao {\& Wang}~\cite{bao2024explicit} and Jahnke et al.~\cite{jahnke2023numerical, jahnke2018adiabatic, jahnke2019adiabatic}, recent advances in exponential integrator design have attempted to address this issue by constructing fully explicit two-step schemes ~\cite{feng2025explicit} that retain symmetry, exhibit favorable long-time behavior and approximately preserve structural properties. While no explicit low-regularity integrator currently exists that rigorously preserves such invariants, we bridge this gap by proposing and rigorously analyzing the first explicit low-regularity integrator based on a \emph{time-relaxation} framework, which exactly conserves mass and exhibits favorable long-time behavior. This framework adapts ideas from time-relaxation techniques used in energy-preserving ODE schemes~\cite{Ketcheson2019,Ranocha2020,Ranocha2022} and extends them to the low-regularity PDE setting. Moreover, due to its general formulation, the proposed method can be naturally extended to a wide class of evolution equations possessing the structure of strongly continuous contraction semigroups~\cite{rousset2021general}, including the Gross–Pitaevskii (GP) equation, which arises from mean-field approximations of many-body quantum systems and is particularly important in modeling Bose–Einstein condensation~\cite{bronsard2023error,bao2012mathematical}.

The construction of the relaxation low-regularity integrators proceed as follows:
\begin{enumerate}
	\item We first introduce the twisted variable \( v(t) := \e^{-it\Delta} u(t) \), which removes the dominant linear oscillation from the solution. Based on this transformation, we introduce an \( m \)-th order (\( m \geq 2 \)) resonance-based low-regularity scheme, written as
	\begin{align}
		v^{n+1} := v^n + \widehat{\Phi}_{t_n}^\tau(v^n), \quad v^0 = v(0), \quad n \geq 0,
	\end{align}
	where \( \widehat{\Phi}_{t_n}^\tau(v^n) \) is derived from the resonance structure at time \( t_n \).
	
	\item We then apply a time-relaxation technique to the twisted variable by introducing a sequence of relaxation parameters \( \gamma_n \), leading to the modified scheme:
	\begin{equation*}
		v_\gamma^{n+1} := v_\gamma^n + \gamma_n \widehat{\Phi}_{\tilde{t}_n}^\tau(v_\gamma^n), \quad n \geq 0,
	\end{equation*}
	where \( v_\gamma^0 = u_0 \), \( \tilde{t}_0 = t_0 = 0 \), and \( \tilde{t}_{n+1} := \tilde{t}_n + \gamma_n \tau \), with \( \gamma_n \) chosen to ensure exact mass conservation. Moreover, we prove that
	\[
	|\gamma_n - 1| \leq C \tau^{m-1},
	\]
	which ensures that the relaxation does not degrade the accuracy and allows for a local error estimate under low regularity assumptions.
	
	\item Combining this with a stability analysis yields the global error bound for the proposed  low-regularity integrators.
\end{enumerate}

\begin{remark}
	The use of the twisted variable \( v \) instead of the original solution \( u \) in the construction of the time-relaxation scheme is motivated by the need to preserve the low-regularity local error after applying relaxation—or equivalently, to retain the desired convergence order under low regularity assumptions (cf.~Subsection~\ref{appen}). Additionally, the transformation is \( L^2 \)-isometric, i.e., \( \|v(t)\|_{L^2} = \|u(t)\|_{L^2}, \ t\geq 0 \), thereby ensuring mass conservation at the level of the transformed variable.
\end{remark}

The proposed relaxation-based low-regularity integrators enjoys several advantages:
\begin{itemize}
	\item Fully explicit and easy to implement;
	\item Suitable for solutions with low regularity;
	\item Exactly preserve mass and maintain favorable long-time behavior through structure-preserving design.
\end{itemize}

The remainder of this paper is organized as follows. In Section~\ref{construction}, we present the construction of the {numerical methods}, starting from a class of resonance-based integrators formulated in terms of the twisted variable. In Section~\ref{CA-SP}, we demonstrate that exact mass conservation can be achieved via a carefully chosen relaxation parameter followed by a local error analysis under low-regularity assumptions and a global convergence result obtained by establishing the stability of the scheme. Numerical results illustrate the accuracy, efficiency, structure preservation, and long-time behavior of the methods are provided in Section~\ref{ne}. Finally, we conclude the paper and outline future research directions in Section~\ref{con}.

\section{Relaxation low regularity integrators}\label{construction}
In this section, we will formulate a new class of structure-preserving and low regularity integrators, which are constructed by using the idea of relaxation to modify low regularity schemes for nonlinear Schr\"{o}dinger equation. To begin with, we give some notations and techniques, which we will work with throughout this paper.
\subsection{Preliminaries}
We start with some important notations, which are introduced for the convenience of the numerical analysis later. The notation \(A\lesssim B\) or \(B\gtrsim A\) denotes \(A\leq CB\) with some generic constant \(C>0\).  Furthermore, we use the symbol \(X=\mathcal{O}(Y)\) to represent any quantity \(X\) such that \(\abs{X}\lesssim \abs{Y}\) with the Euclidean norm $\abs{\cdot}$. {We underline the dependence of constants as follows: \(C(\cdot),\,C(\cdot,\cdot)\), or similar notations denote generic positive continuous functions, which remain bounded whenever their arguments are restricted to a bounded domain.}

We continue with the notation
\[\langle \boldsymbol{\xi}\rangle=\sqrt{1+\boldsymbol{\xi}\cdot\boldsymbol{\xi}}=(1+\abs{\boldsymbol{\xi}}^2)^{\frac{1}{2}},\ \ \text{for}\  \boldsymbol{\xi}=(\xi_1,\xi_2,...,\xi_d)\in\mathbb{Z}^d.\] Then the Fourier transform of a function on \(\mathbb{T}^d\) is defined by
\[\mathcal{F}(f)(\boldsymbol{\xi})=\hat{f}(\boldsymbol{\xi})=\frac{1}{(2\pi)^d}\int_{\mathbb{T}^d}\e^{-i\mathbf{x}\cdot\boldsymbol{\xi}}f(\mathbf{x})\mathrm{d}\mathbf{x},\]
and hence the Fourier inversion formula reads as
\[f(\mathbf{x})=\int \e^{i\mathbf{x}\cdot\boldsymbol{\xi}}\hat{f}(\boldsymbol{\xi})\,(\mathrm{d}\boldsymbol{\xi}),\]
where \((\mathrm{d}\boldsymbol{\xi})\) is the normalized counting measure on \(\mathbb{Z}^d\) such that
\[\int a(\boldsymbol{\xi})(\mathrm{d}\boldsymbol{\xi})=\sum_{\boldsymbol{\xi}\in\mathbb{Z}^d}a(\boldsymbol{\xi}).\]
Subsequently we have the following usual properties of the Fourier transform:
\begin{align*}
	&\norm{f}_{L^2(\mathbb{T}^d)}=(2\pi)^{\frac{d}{2}}\norm{\hat{f}}_{l^2(\mathbb{Z}^d)}\quad \text{(Plancherel)};\\
	&\langle f, g \rangle 
	= \int_{\mathbb{T}^d} f(\mathbf{x})\, \overline{g(\mathbf{x})} \, \mathrm{d}\mathbf{x} 
	= (2\pi)^d \int \hat{f}(\boldsymbol{\xi})\, \overline{\hat{g}(\boldsymbol{\xi})} \, (\mathrm{d}\boldsymbol{\xi})
	\quad \text{(Parseval)};\\
	&\widehat{(f g)}(\boldsymbol{\xi}) 
	= \int \hat{f}(\boldsymbol{\xi} - \boldsymbol{\eta})\, \hat{g}(\boldsymbol{\eta})\, (\mathrm{d}\boldsymbol{\eta})
	\quad \text{(convolution)}.
\end{align*}
The Sobolev space \(H^s(\mathbb{T}
^d)\ \text{for}\  s\geq 0\) equips with the equivalent norm
\[\norm{f}_{H^s(\mathbb{T}^d)}=\norm{J^sf}_{L^2(\mathbb{T}^d)}=(2\pi)^{\frac{d}{2}}\norm{\langle\boldsymbol{\xi}\rangle^s\hat{f}(\boldsymbol{\xi})}_{l^2(\mathbb{Z}^d)},\]
with the operator \(J^s=(1-\Delta)^{\frac{s}{2}}\), \((s\geq 0)\). In the rest of this paper, to save space and typing efforts, we shall use the notations \(\norm{\,\cdot\,}_{H^s}\) and \(\norm{\,\cdot\,}_{L^2}\) to denote the norm of \(H^s(\mathbb{T}^d)\) and \(L^2(\mathbb{T}^d)\), respectively.

Throughout the error analysis we will repeatedly use the following well-known bilinear estimates.
\begin{lemma}\label{bilinear_estimates}
For any \(r>d/2\), \(f,\,g\in H^r\), we have
\begin{equation*}
\norm{fg}_{H^r}\lesssim \norm{f}_{H^r} \norm{g}_{H^r}.
\end{equation*}
\end{lemma}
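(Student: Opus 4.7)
The plan is to pass to the Fourier side using Plancherel's identity, turn the product $fg$ into a convolution, and control the resulting weighted convolution by Young-type arguments that exploit the embedding $H^r(\mathbb{T}^d)\hookrightarrow L^\infty(\mathbb{T}^d)$ for $r>d/2$. Concretely, using the notation of the excerpt I would write
\begin{equation*}
\norm{fg}_{H^r}^2 \sim \sum_{\boldsymbol{\xi}\in\mathbb{Z}^d}\langle\boldsymbol{\xi}\rangle^{2r}\,\bigl|\widehat{fg}(\boldsymbol{\xi})\bigr|^2,
\qquad
\widehat{fg}(\boldsymbol{\xi})=\int\hat f(\boldsymbol{\xi}-\boldsymbol{\eta})\hat g(\boldsymbol{\eta})(\mathrm{d}\boldsymbol{\eta}),
\end{equation*}
which reduces the claim to a weighted estimate on a discrete convolution.

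The key algebraic input is the elementary ``Peetre-type'' inequality
\begin{equation*}
\langle\boldsymbol{\xi}\rangle^{r}\le 2^{r}\bigl(\langle\boldsymbol{\xi}-\boldsymbol{\eta}\rangle^{r}+\langle\boldsymbol{\eta}\rangle^{r}\bigr),
\end{equation*}
obtained from $1+|\boldsymbol{\xi}|^2\le 2(1+|\boldsymbol{\xi}-\boldsymbol{\eta}|^2)+2(1+|\boldsymbol{\eta}|^2)$. Plugging this into the convolution and splitting into two symmetric pieces, I would bound $\langle\boldsymbol{\xi}\rangle^{r}|\widehat{fg}(\boldsymbol{\xi})|$ by the sum of the convolutions $\bigl(\langle\cdot\rangle^{r}|\hat f|\bigr)\ast|\hat g|$ and $|\hat f|\ast\bigl(\langle\cdot\rangle^{r}|\hat g|\bigr)$ evaluated at $\boldsymbol{\xi}$.

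For each of these, I would apply Young's convolution inequality in the form $\ell^2\ast\ell^1\hookrightarrow\ell^2$ and then use the elementary Cauchy--Schwarz bound
\begin{equation*}
\norm{\hat h}_{\ell^1(\mathbb{Z}^d)}
\le\Bigl(\sum_{\boldsymbol{\xi}}\langle\boldsymbol{\xi}\rangle^{-2r}\Bigr)^{\!1/2}\norm{\langle\boldsymbol{\xi}\rangle^{r}\hat h}_{\ell^2(\mathbb{Z}^d)}
\lesssim \norm{h}_{H^r},
\end{equation*}
where the series $\sum_{\boldsymbol{\xi}\in\mathbb{Z}^d}\langle\boldsymbol{\xi}\rangle^{-2r}$ converges precisely because $r>d/2$. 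Putting the two pieces together yields $\norm{fg}_{H^r}\lesssim\norm{f}_{H^r}\norm{g}_{H^r}$.

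The only genuinely delicate point is the critical use of the assumption $r>d/2$ to guarantee $\langle\cdot\rangle^{-r}\in\ell^2(\mathbb{Z}^d)$; everything else is bookkeeping with Fourier series on the torus. I do not anticipate any obstacle beyond carefully tracking the constants through the symmetric splitting.
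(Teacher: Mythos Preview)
Your argument is correct and is one of the standard proofs of the algebra property of $H^r(\mathbb{T}^d)$ for $r>d/2$. Note, however, that the paper itself does not supply a proof of this lemma at all: it merely records the estimate as ``well-known'' and uses it repeatedly in the error analysis. So there is no paper proof to compare against; your sketch simply fills in the omitted classical argument.
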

Next we define two functions
\begin{equation}\label{varphi_psi}
	\varphi_1(z)=\int_{0}^{1}\e^{zs}\mathrm{d}s=\begin{cases}
		\frac{\e^z-1}{z},&z\neq 0,\\
		1,&z=0,
	\end{cases} \quad\varphi_2(z)=\int_{0}^{1}s\e^{zs}\mathrm{d}s=\begin{cases}
		\frac{z\e^z-\e^z+1}{z^2},&z\neq 0,\\
		\frac{1}{2},&z=0.
	\end{cases}
\end{equation}
Then it can be easily obtained that \(\abs{\varphi_1(iz)}\leq 1\) and \(\abs{\varphi_2(iz)}\leq \frac{1}{2}\). Furthermore, based on the inequality \(\abs{\e^{ix}-1}\leq \abs{x}\), we have the following inequalities which can be used for error estimates.
\begin{lemma}\label{inequalities}
	For all \(x,\,y\in \mathbb{R}\), it holds that
	\begin{align*}
		&\abs{\varphi_1(ix)-\varphi_1(iy)} \leq \frac{
			1}{2}\abs{x-y}, \qquad \quad\,
		\abs{\varphi_2(ix)-\varphi_2(iy)} \leq \frac{
			1}{3}\abs{x-y}, \\
		&\abs{\e^{ix}\varphi_1(ix)-\e^{iy}\varphi_1(iy)} \leq 2\abs{x-y}, \quad 
		\abs{\e^{ix}\varphi_2(ix)-\e^{iy}\varphi_2(iy)} \leq \abs{x-y}.
	\end{align*}
\end{lemma}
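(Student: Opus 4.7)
The plan is to work directly from the integral representations of $\varphi_1(iz)$ and $\varphi_2(iz)$ supplied by \eqref{varphi_psi}, together with the elementary pointwise estimate $|\e^{i\alpha}-\e^{i\beta}|=|\e^{i(\alpha-\beta)}-1|\le|\alpha-\beta|$ that follows from the hypothesis $|\e^{ix}-1|\le|x|$ after factoring out $\e^{i\beta}$. Using the integral form rather than the closed-form quotient has the additional benefit that the bounds are uniform in $z$ and do not require a separate discussion of the removable singularity at $z=0$.

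For the first pair of estimates, I would simply write
\begin{equation*}
\varphi_1(ix)-\varphi_1(iy)=\int_0^1\!\bigl(\e^{ixs}-\e^{iys}\bigr)\,\mathrm{d}s,\qquad
\varphi_2(ix)-\varphi_2(iy)=\int_0^1\! s\,\bigl(\e^{ixs}-\e^{iys}\bigr)\,\mathrm{d}s,
\end{equation*}
apply the triangle inequality under the integral sign, and use $|\e^{ixs}-\e^{iys}|\le s|x-y|$. The resulting integrals $\int_0^1 s\,\mathrm{d}s=\tfrac{1}{2}$ and $\int_0^1 s^2\,\mathrm{d}s=\tfrac{1}{3}$ yield the two claimed constants precisely.

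For the third and fourth estimates, I would decompose
\begin{equation*}
\e^{ix}\varphi_j(ix)-\e^{iy}\varphi_j(iy)=\bigl(\e^{ix}-\e^{iy}\bigr)\varphi_j(ix)+\e^{iy}\bigl(\varphi_j(ix)-\varphi_j(iy)\bigr),\quad j=1,2,
\end{equation*}
and combine the a priori pointwise bounds $|\varphi_1(iz)|\le 1$ and $|\varphi_2(iz)|\le\tfrac{1}{2}$ (recorded just above the statement of the lemma) with the Lipschitz bounds established in the previous paragraph. This produces the sharper constants $\tfrac{3}{2}$ for $j=1$ and $\tfrac{5}{6}$ for $j=2$, which are in particular majorized by the stated $2$ and $1$.

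There is no real obstacle here; the argument is short and largely mechanical. The only bookkeeping to keep in mind is that the two pieces in the splitting are each controlled by different constants, and one must check that their sum stays below the bound claimed in the statement. Since slack is available, a cruder splitting would also suffice, which suggests the stated constants were chosen to be easy to remember rather than sharp.
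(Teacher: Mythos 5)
Your proposal is correct and follows essentially the same route as the paper: the first pair via the integral representations and $|\e^{isx}-\e^{isy}|\le s|x-y|$, and the second pair via the same splitting $(\e^{ix}-\e^{iy})\varphi_j(ix)+\e^{iy}(\varphi_j(ix)-\varphi_j(iy))$ combined with $|\varphi_1(iz)|\le 1$ and $|\varphi_2(iz)|\le\tfrac12$. Your observation that the resulting constants $\tfrac32$ and $\tfrac56$ are sharper than the stated $2$ and $1$ is a minor bonus but does not change the argument.
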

\begin{proof}
	From the definition of functions \(\varphi_1\) and \(\varphi_2\) in \eqref{varphi_psi}, we have
	\begin{align*}
		\abs{\varphi_1(ix)-\varphi_1(iy)}&\leq \int_{0}^{1}\abs{\e^{isx}-\e^{isy}}\mathrm{d}s\leq \int_{0}^{1}\abs{s(x-y)}\mathrm{d}s \leq \frac{
			1}{2}\abs{x-y},\\
		\abs{\varphi_2(ix)-\varphi_2(iy)}&\leq \int_{0}^{1}s\abs{\e^{isx}-\e^{isy}}\mathrm{d}s\leq \int_{0}^{1}\abs{s^2(x-y)}\mathrm{d}s \leq \frac{
			1}{3}\abs{x-y}.
	\end{align*}
	According to above results, using triangle inequality and the facts \(\abs{\varphi_1(iz)}\leq 1\), \(\abs{\varphi_2(iz)}\leq \frac{1}{2}\), we have
	\begin{align*}
		\abs{\e^{ix}\varphi_1(ix)-\e^{iy}\varphi_1(iy)}&\leq \abs{(\e^{ix}-\e^{iy})\varphi_1(ix)}+\abs{\e^{iy}(\varphi_1(ix)-\varphi_1(iy))}\leq 2\abs{x-y},\\
		\abs{\e^{ix}\varphi_2(ix)-\e^{iy}\varphi_2(iy)}&\leq \abs{(\e^{ix}-\e^{iy})\varphi_2(ix)}+\abs{\e^{iy}(\varphi_2(ix)-\varphi_2(iy))}\leq \abs{x-y}.
	\end{align*}
	The proof is completed.
\end{proof}
\subsection{Construction of the methods}\label{general construction}
We now provide the underlying idea behind building a new class of structure-preserving LRIs, which can preserve the geometry structure of the NLS equation \eqref{general_NLS} under rough initial data.

We employ the \textit{twisted} variable
\[
v(t,\mathbf{x}) = \exp(-i t \Delta) u(t,\mathbf{x}),
\]
which satisfies the following initial value problem that is equivalent to \eqref{general_NLS}:
\begin{equation}\label{general_NLS_v}
	\begin{cases}
		i \partial_t v(t,\mathbf{x}) = \lambda \e^{-i t\Delta} \Big(\big(  \e^{i t \Delta} v(t,\mathbf{x})\big)^{p+1}\big( \e^{-i t \Delta} \overline{v(t,\mathbf{x})}\big)^p\Big), \quad &(t,\mathbf{x}) \in [0,T] \times \mathbb{T}^d, \\
		v(0,\mathbf{x}) = u_0(\mathbf{x}),\quad &\mathbf{x} \in \mathbb{T}^d.
	\end{cases}
\end{equation}
Then it turns out that the \(L^2\)-norm of \(v\) is still a conserved quantity since the operator \( \exp(-i t \Delta)\) is an isometry on \(L^2\) space, namely,
\begin{equation}
	\norm{v(t)}_{L^2}=\norm{u(t)}_{L^2}=\norm{u_0}_{L^2}=\norm{v(0)}_{L^2}.
\end{equation}
Furthermore, the mild solution at \(t=t_n+\sigma\) for \eqref{general_NLS_v} can be written in the following form (using the short notation \(v(t)=v(t,\mathbf{x})\))
\begin{align*}
	v(t_n &+ \sigma)=v(t_n)\\
	& - i\lambda \int_0^{\sigma} \e^{-i (t_n + s) \Delta} 
	\left( \left( \e^{i(t_n + s)\Delta} v(t_n + s)\right) ^{p+1}\left( 
	\e^{-i(t_n + s)\Delta} \overline{v(t_n + s)} \right)^p\right)   \mathrm{d} s.
\end{align*}
Therefore, the key step of constructing the numerical schemes for equation \eqref{general_NLS_v} is to properly approximate the integral: \[I_\sigma:=\int_0^{\sigma} \e^{-i (t_n + s) \Delta} \left( \left( \e^{i(t_n + s)\Delta} v(t_n + s)\right) ^{p+1}\left( 
\e^{-i(t_n + s)\Delta} \overline{v(t_n + s)} \right)^p\right)   \mathrm{d} s.\]
In this case, we can claim that the general numerical scheme for solving  equation \eqref{general_NLS_v} reads as the following formula: 
\begin{equation}\label{general_v_LRI}
	v^{n+1}:=v^n+\widehat{\Phi}_{t_n}^\tau(v^n),\quad v^0=v(0), \quad n\geq 0,
\end{equation}
where \(v^n\) is the approximation of \(v(t_n)\) with \(t_n:=n\tau\) and \(\tau\) is time step size. Here and after, we focus only on fully explicit LRIs in the form of \eqref{general_v_LRI} which possess high computational efficiency and are \(m\)th-order \((m\geq 2)\).
Twisting it back to \(u\), we easily obtain a one-step numerical scheme for \eqref{general_NLS}:
\begin{equation}\label{general_u_LRI}
	u^{n+1}:=\Psi^\tau(u^n)=\e^{i \tau \Delta}u^n+\e^{i t_{n+1} \Delta}\widehat{\Phi}_{t_n}^\tau(\e^{-i t_n \Delta}u^n),
\end{equation}
which owns the same regularity assumption as \eqref{general_v_LRI}.

In order to formulate a general frame of new methods, we suppose the following assumptions hold.
\begin{assumption}\label{Assumption}
	Let \(\Phi_{t}^\tau(f)=f+\widehat{\Phi}_{t}^\tau(f)\) for \(f\in H^r\) \((r\geq0)\).
	
	\textnormal{(i)} \emph{\underline{The method is unconditionally stable,}} i.e., there is a continuous function \(L\): \(\mathbb{R}_{\geq 0}\times \mathbb{R}_{\geq 0} \rightarrow \mathbb{R}_{\geq 0}\) such that for \(f,g \in H^r\), we have 
	\begin{equation*}
		\norm{\widehat{\Phi}_{t}^\tau(f)-\widehat{\Phi}_{t}^\tau(g)}_{H^r}\leq\tau L\,(\norm{f}_{H^r}, \norm{g}_{H^r}) \norm{f-g}_{H^r}.
	\end{equation*}
	
	\textnormal{(ii)} \emph{\underline{Local error is of order \(m+1\),}} i.e., there exists a continuous function \(M:\ \mathbb{R}_{\geq 0}\rightarrow  \mathbb{R}_{\geq 0}\) such that for \(v\in H^{r+\beta}\), the local truncation error \(\mathcal{R}_n^\tau:=v(t_n+\tau)-\widehat{\Phi}_{t_n}^\tau(v(t_n))\) satisfies 
	\begin{equation*}
		\lVert \mathcal{R}_n^\tau\rVert_{H^r}\leq \tau^{m+1}M\Big(\sup_{t\in[t_n,t_{n+1}]}\lVert v(t)\rVert_{H^{r+\beta}} \Big),\quad m\geq 2, \ \beta\geq 0,\ {r+\beta\geq2}. 
	\end{equation*}
\end{assumption}

{\begin{remark}
    Assumption~\ref{Assumption} states that $\Phi_t^\tau$ is a low-regularity integrator that achieves $m$th-order convergence in $H^r$ for solutions in $H^{r+\beta}$, where $r \geq 0$, $m \geq 2$, $\beta \geq 0$, and $r + \beta \geq 2$. The parameter $\beta$ indicates the regularity required of the solution to achieve the desired convergence rate in $H^r$; ideally, $\beta = 0$ implies no loss. 
\end{remark}}

In the following, we give the details of construction for structure-preserving low regularity methods.

\textbf{(I) {The quadratic first integral evolution by LRIs.}} We apply the LRIs in the form of \eqref{general_v_LRI} to solve \eqref{general_NLS_v}. Then the change in the quadratic first integral between one step and the next is
\begin{align*}
	\norm{v^{n+1}}^2_{L^2}-\norm{v^n}^2_{L^2}
	=&\,\int_{\mathbb{T}^d}\abs{v^n+\widehat{\Phi}_{t_n}^\tau(v^n)}^2-\abs{v^n}^2\,\mathrm{d}\mathbf{x}\\
	=&2\operatorname{Re}\left(\langle v^n,\widehat{\Phi}_{t_n}^\tau(v^n)\rangle \right) +\norm{\widehat{\Phi}_{t_n}^\tau(v^n)}^2_{L^2}.
\end{align*}
Thus the equality \(\norm{v^{n+1}}^2_{L^2}=\norm{v^n}^2_{L^2}\) holds if and only if \[2\operatorname{Re}\left(\langle v^n,\widehat{\Phi}_{t_n}^\tau(v^n)\rangle \right) +\norm{\widehat{\Phi}_{t_n}^\tau(v^n)}^2_{L^2}=0.\] 

For the existed LRIs, the above equality only can be satisfied for implicit symplectic LRIs \cite{Maierhofer2022}. However, it's almost impossible for explicit LRIs to exactly preserve the quadratic first integral until now. Accordingly, constructing fully explicit integrators that preserve geometric structure and accommodate low regularity initial data is of great significance.

\textbf{(II) {Relaxation low regularity integrators}}. Adding a non-zero relaxation parameter \(\gamma_n\) \cite{Ketcheson2019,Ranocha2020,Ranocha2022} to the scheme \eqref{general_v_LRI}, then a class of relaxation low regularity integrators read as: 
{\begin{equation}\label{relax_general_v_LRI}
	v_\gamma^{n+1}:=\Phi_\gamma^n(v_\gamma^n),\quad \Phi_\gamma^n(f):=f+\gamma_n\widehat{\Phi}_{\tilde{t}_n}^\tau(f), \quad n\geq 0,
\end{equation}}
where \(v_\gamma^{n+1}\) is the approximation of \(v(\tilde{t}_{n+1})\) and \(\tilde{t}_{n+1}:=\tilde{t}_{n}+\gamma_n\tau\) with \(\tilde{t}_0=t_0=0\), \(v_\gamma^0=u_0\). The parameter \(\gamma_n\) is determined by the equation:
\begin{equation*}
	\norm{v_\gamma^{n+1}}^2_{L^2}=\norm{v_\gamma^n}^2_{L^2},\quad n\geq 0.
\end{equation*}

As a matter of fact, it's easy to calculate the value of \(\gamma_n\) in the following manners: 
\begin{align*}
	\,\norm{v_\gamma^{n+1}}^2_{L^2}-\norm{v_\gamma^n}^2_{L^2}
	=&\,\int_{\mathbb{T}^d}\abs{v_\gamma^n+\gamma_n\widehat{\Phi}_{\tilde{t}_n}^\tau(v_\gamma^n)}^2-\abs{v_\gamma^n}^2\,\mathrm{d}\mathbf{x}\\
	=&\,2\gamma_n\operatorname{Re}\left(\big\langle v_\gamma^n,\widehat{\Phi}_{\tilde{t}_n}^\tau(v_\gamma^n)\big\rangle \right) +\gamma_n^2\norm{\widehat{\Phi}_{\tilde{t}_n}^\tau(v_\gamma^n)}^2_{L^2}.
\end{align*}
Then we set the last equality to zero to obtain 
\[\gamma_n=-\frac{2\operatorname{Re}\left(\big\langle v_\gamma^n,\widehat{\Phi}_{\tilde{t}_n}^\tau(v_\gamma^n)\big\rangle \right)}{\norm{\widehat{\Phi}_{\tilde{t}_n}^\tau(v_\gamma^n)}^2_{L^2}},\quad \norm{\widehat{\Phi}_{\tilde{t}_n}^\tau(v_\gamma^n)}_{L^2}\neq0.\]
In addition, if \(\norm{\widehat{\Phi}_{\tilde{t}_n}^\tau(v_\gamma^n)}_{L^2}=0\), then \(\widehat{\Phi}_{\tilde{t}_n}^\tau(v_\gamma^n)=0\), i.e., \(\tilde{v}^{n+1}:=v_\gamma^n+\widehat{\Phi}_{\tilde{t}_n}^\tau(v_\gamma^n)=v_\gamma^n\) is the approximation of \(v(t)\) at \(t=\tilde{t}_n+\tau\). In this case, we can achieve conservation by taking simply \(\gamma_n=1\). Thus the parameter \(\gamma_n\) can be defined explicitly as
\begin{equation}\label{gamma_n}
	\gamma_n=\begin{cases}
		1-\frac{\norm{\Phi_{\tilde{t}_n}^\tau(v_\gamma^n)}^2_{L^2}-\norm{v_\gamma^n}^2_{L^2}}{\norm{\widehat{\Phi}_{\tilde{t}_n}^\tau(v_\gamma^n)}^2_{L^2}},\quad & \norm{\widehat{\Phi}_{\tilde{t}_n}^\tau(v_\gamma^n)}_{L^2}\neq0,\\
		1,\quad &\norm{\widehat{\Phi}_{\tilde{t}_n}^\tau(v_\gamma^n)}_{L^2}=0,
	\end{cases}
\end{equation}
which shows the well-posedness of new obtained methods.

Subsequently, we substitute its value into the formula \eqref{relax_general_v_LRI} to get the numerical solution which can preserve the invariant \(\norm{v(t)}_{L^2}.\)

Finally,  we can trun it back to \(u\) by
\(u_\gamma^n=\e^{i\tilde{t}_n\Delta}v_\gamma^n,\  n\geq 0, \)
\begin{equation}\label{RLRIs-v}
	u_\gamma^{n+1}=\e^{i \gamma_n\tau \Delta}u_\gamma^n+\gamma_n\e^{i \tilde{t}_{n+1} \Delta}\widehat{\Phi}_{\tilde{t}_n}^\tau(\e^{-i \tilde{t}_n \Delta}u_\gamma^n),
\end{equation}
which naturally conserves the quantity \(\norm{u(t)}_{L^2}\) as \(\norm{u_\gamma^{n+1}}_{L^2}=\norm{u_\gamma^n}_{L^2}\), where we use the fact that \( \exp(i t \Delta)\) is an isometry on \(L^2\) again. {The numerical flow~\eqref{RLRIs-v} is denoted `RLRIs-v' (to be contrasted with ‘RLRI-u’ later), where the suffixes `-v' and `-u' refer to the distinct variable-relaxation employed in their construction.}

For smooth initial data and numerical flow \(\Psi^\tau(u)\) of order-\(m\), it has been proved that we can find a relaxation parameter \(\gamma_n=1+\mathcal{O}(\tau^{m-1})\) satisfies \(\|u^n+\gamma_n(\Psi^\tau(u^n)-u^n)\|_{L^2}=\norm{u^n}_{L^2}\), and it maintains the convergence order of the original methods \(u^{n+1}=\Psi^\tau(u^n)\) \cite{Ketcheson2019,Ranocha2020,Ranocha2022}. However, if we add the parameter to the LRIs of \(u\), it may lead to an order reduction {(especially for \(\beta<4\))}, which will be proved in Subsection \ref{appen}. Hence, in order to avoid the order reduction, we only pay attention to the RLRIs-v \eqref{RLRIs-v} which is constructed based on the relaxation of twisted variable \(v\).

In what follows, we introduce a second order low regularity integrator for solving a classical equation: the cubic Schrödinger equation, and then extend the scheme to the general NLS equation \eqref{general_NLS}. 
\subsubsection{RLRI for cubic NLS}
We first consider the nonlinear Schr\"{o}dinger equation with cubic nonlinearity on a \(d\ (=1,2,3)\) dimensional torus:
\begin{equation}\label{eq_u_cubic_NLS}
	\begin{cases}
		i\partial_t u(t,\mathbf{x})=-\Delta u(t,\mathbf{x})+\lambda \abs{u(t,\mathbf{x})}^2 u(t,\mathbf{x}),\ \ &(t,\mathbf{x})\in [0,T]\times \mathbb{T}^d,\\
		u(0,\mathbf{x})=u_0(\mathbf{x}),&\mathbf{x}\in\mathbb{T}^d,
	\end{cases}
\end{equation}
where \(\lambda=\pm1\) and \(u(t,\mathbf{x})\in \mathbb{C}\). Clearly, this cubic NLS is in the form of \eqref{general_NLS} with \(p=1\).

Introducing the following second order low regularity integrator developed in \cite{bruned_schratz_2022,ostermann2022second} for cubic NLS as an example:
\begin{equation}\label{u_NLS}
	u^{n+1}:=\Psi^\tau_{1,2}(u^n),\quad n\geq 0,\quad u^0=u_0,
\end{equation}
where
\begin{equation}\label{Psi_1}
	\begin{aligned}
		\Psi^\tau_{1,2}(f)=&\,\e^{i\tau\Delta}f-i\lambda\tau \e^{i\tau\Delta}\big[(f)^2\cdot \left( \varphi_1(-2i\tau\Delta)-\varphi_2(-2i\tau\Delta)\right)\bar{f} \big] \\
		&\,-i\lambda\tau (\e^{i\tau\Delta}f)^2\cdot\big(\e^{i\tau\Delta}\varphi_2(-2i\tau\Delta)\bar{f} \big)-\lambda^2\frac{\tau^2}{2}\e^{i\tau\Delta}\big[\abs{f}^4 f\big],
	\end{aligned}
\end{equation}
corresponding to the scheme of its twisted variable \(v:=\e^{-it\Delta}u\):
\begin{equation}\label{v_cubic_NLS}
	v^{n+1}:=v^n+\psi_{t_n}^\tau(v^n)
\end{equation}
equipped with
\begin{equation}\label{psi_t}
	\begin{aligned}
		\psi_{t}^\tau(f)=&\,-i\lambda\tau \e^{-it\Delta}\left[(\e^{it\Delta}f)^2\cdot \left( \varphi_1(-2i\tau\Delta)-\varphi_2(-2i\tau\Delta)\right)\e^{-it\Delta}\bar{f} \right] \\
		&-i\lambda\tau \e^{-i(t+\tau)\Delta}\left[ (\e^{i(t+\tau)\Delta}f)^2\cdot\varphi_2(-2i\tau\Delta)(\e^{-i(t-\tau)\Delta}\bar{f}) \right]\\
		&-\lambda^2\frac{\tau^2}{2}\e^{-it\Delta}\left[\abs{\e^{it\Delta}f}^4 \e^{it\Delta}f\right].
	\end{aligned}
\end{equation}
Then we can apply relaxation technique to the scheme as \(v_\gamma^{n+1}:=v^n+\gamma_n\psi_{\tilde{t}_n}^\tau(v_\gamma^n)\), where \(\gamma_n\) is determined in the same way as \eqref{gamma_n}, by replacing \(\widehat{\Phi}_{\tilde{t}_n}^\tau\) with \(\psi_{\tilde{t}_n}^\tau\). Subsequently, we have 
\begin{equation}\label{RLRI1-v}
	u_\gamma^{n+1}:=\e^{i\gamma_n\tau\Delta}u_\gamma^n+\gamma_n\e^{i\tilde{t}_{n+1}\Delta}\psi_{\tilde{t}_n}^\tau(\e^{-i\tilde{t}_n\Delta}u_\gamma^n), \quad n\geq 0,
\end{equation}
where \(u_\gamma^{n}=\e^{i\tilde{t}_n\Delta}v_\gamma^{n}\). The scheme 
\eqref{RLRI1-v} shall be referred to as `RLRI1-v'.

The local error of the method \eqref{v_cubic_NLS} was analyzed in \cite{bruned_schratz_2022}, while the stability and global convergence analysis provided in \cite{ostermann2022second}. Apparently, the numerical flow  satisfies Assumptions \ref{Assumption} (i) and (ii) for any \(r > d/2\) with \(m= 2\), \(\beta = 2\).

\subsubsection{RLRI for general NLS \eqref{general_NLS}}

Following a procedure analogous to that described in \cite{ostermann2022second} for constructing the scheme above, we derive a second-order numerical scheme for solving \eqref{general_NLS}: \begin{equation}\label{u_general_NLS}
		u^{n+1}:=\Psi^\tau_{p,2}(u^n),\quad n\geq 0,\quad u^0=u_0,
	\end{equation}
	where
\begin{equation}\label{Psi_p}
		\begin{aligned}
			\Psi^\tau_{p,2}(f)=&\,\e^{i\tau\Delta}f-i\lambda\tau \e^{i\tau\Delta}\big[(f)^{p+1}\cdot \left( \varphi_1(-2i\tau\Delta)-\varphi_2(-2i\tau\Delta)\right)(\bar{f})^p \big] \\
			&\,-i\lambda\tau (\e^{i\tau\Delta}f)^{p+1}\cdot\big(\e^{i\tau\Delta}\varphi_2(-2i\tau\Delta)(\bar{f})^p \big)-\lambda^2\frac{\tau^2}{2}\e^{i\tau\Delta}\big[\abs{f}^{4p} f\big].
		\end{aligned}
	\end{equation}
This formulation corresponds to the scheme for the twisted variable  \(v:=\e^{-it\Delta}u\) as follows:
\begin{equation}\label{v_general_NLS}
		\begin{aligned}
			v^{n+1}:=v^n+\phi_{t_n}^\tau(v^n)
		\end{aligned}
\end{equation}
equipped with 
\begin{equation}\label{phi_t}
	\begin{aligned}
		\phi_{t}^\tau(f)=&-i\lambda\tau \e^{-it\Delta}\left[ (\e^{it\Delta}f)^{p+1}\cdot\left( \varphi_1(-2i\tau\Delta)-\varphi_2(-2i\tau\Delta)\right)(\e^{-it\Delta}\bar{f})^p \right] \\
		&-i\lambda\tau \e^{-i(t+\tau)\Delta}\left[ (\e^{i(t+\tau)\Delta}f)^{p+1}\cdot\varphi_2(-2i\tau\Delta)(\e^{-i(t-\tau)\Delta}\bar{f})^p \right]\\
		&-\lambda^2\frac{\tau^2}{2}\e^{-it\Delta}\left[ \abs{\e^{it\Delta}f}^{4p} (\e^{it\Delta}f)\right].
	\end{aligned}
\end{equation}
This scheme also satisfies Assumptions \ref{Assumption} (i) and (ii) for any \(r > d/2\) with \(m= 2\), \(\beta = 2\).
Applying the general framework outlined in Section \ref{general construction}, we propose the second-order scheme for the NLS \eqref{general_NLS} as follows: 
\begin{equation}\label{relax_v_NLS}
	v_\gamma^{n+1}:=v_\gamma^n+\gamma_n\phi_{\tilde{t}_n}^\tau(v_\gamma^n),\quad (n\geq 0),
\end{equation}
where \(\gamma_n\) can be determined in the same manner as in \eqref{gamma_n}.

{\begin{remark}
		While our focus is on the NLS equation, the above approach is readily applicable to a wide range of dispersive nonlinear PDEs and more invariants, which will be the subject of our future work.
\end{remark}}

\section{Numerical analysis} \label{CA-SP}
In this section, we rigorously show the conservation property and convergence analysis of the obtained RLRIs-v. Moreover, we will study the order reduction if we add the parameter to the LRIs of \(u\).
\subsection{Conservation property} We first study the structure-preserving property of the derived RLRIs-v.
\begin{mytheo}[Conservation property of general RLRIs-v]\label{Conservation}
	The RLRIs-v \eqref{RLRIs-v} exactly preserve the mass, namely,
	\[\norm{u_\gamma^n}_{L^2}=\norm{u_0}_{L^2},\quad n\geq 0.\]
\end{mytheo}
\begin{proof}
	In the case \(\widehat{\Phi}_{\tilde{t}_n}^\tau(v_\gamma
	^n)=0\), we obviously have \(v_\gamma^{n+1}=\tilde{v}^{n+1}\) and \(\norm{\tilde{v}^{n+1}}_{L^2}=\norm{v_\gamma^{n}}_{L^2}\). On the other hand, if \(\widehat{\Phi}_{\tilde{t}_n}^\tau(v_\gamma
	^n)\neq 0\), from the definition of the relaxation parameter \(\gamma_n\), it follows that \(\norm{v_\gamma^{n+1}}_{L^2}=\norm{v_\gamma^{n}}_{L^2}\). Twisting them back to \(u_\gamma^n\) and \(u_\gamma^{n+1}\) and using the identity \(\e^{it\Delta}\) in \(L^2\),   it automatically reaches the fact  \(\norm{u_\gamma^{n+1}}_{L^2}=\norm{u_\gamma^{n}}_{L^2}\). This indicates that \(\norm{u_\gamma^{n+1}}_{L^2}=\norm{u_0}_{L^2}\) for \(n\geq 0\), which gives the desired result.
\end{proof}

\subsection{General framework of convergence analysis}

For the convenience of error estimates, we further assume the following properties hold for all \(0<\tau\leq\tau_0\).
\begin{assumption}\label{Assumption2}
For \(\Phi_\gamma^n\) \eqref{relax_general_v_LRI}, \(v\in H^{r+\beta}\) with \(\ r\geq0,\,\beta\geq0,\,r+\beta\geq2\), assume 

	\textnormal{(i)} There exists a continuous function \(C_1:\ \mathbb{R}_{\geq 0}\times \mathbb{R}_{\geq 0}\rightarrow  \mathbb{R}_{\geq 0}\) such that  
	\begin{align*}
		\abs{\gamma_n-1}&\leq\tau^{m-1}C_1\Big( \norm{u_0}_{L^2}, \norm{v}_{L^\infty(0,T;H^{r+\beta})} \Big).
	\end{align*}
	
	\textnormal{(ii)} There exists a {continuous} function \(C_2:\ \mathbb{R}_{\geq 0}\times \mathbb{R}_{\geq 0}\rightarrow  \mathbb{R}_{\geq 0}\) such that the numerical flow \(\widehat{\Phi}_{t}^\tau\) fulfills
	\begin{equation*}
		\norm{\gamma_n\widehat{\Phi}_{\tilde{t}_n}^\tau(v_\gamma^n)-\widehat{\Phi}_{\tilde{t}_n}^{\gamma_n\tau}(v_\gamma^n)}_{H^r}\leq\tau^{m+1}C_2\Big( \norm{u_0}_{L^2},\norm{v}_{L^\infty(0,T;H^{r+\beta})} \Big).
	\end{equation*}
\end{assumption}

Based on Assumptions \ref{Assumption} and \ref{Assumption2}, we now enter the main bulk of convergence analysis for RLRIs-v. First of all, we prove the stability of our new obtained methods \eqref{relax_general_v_LRI}.
\begin{prop}[Stability estimate]\label{Stability}
	Suppose that \(\widehat{\Phi}_{t}^\tau\) satisfies Assumption \ref{Assumption} (i) and Assumption \ref{Assumption2} (i) for some \(r\geq 0\). Then there is a continuous function \(C\) such that the method \eqref{relax_general_v_LRI} satisfies
	\begin{equation}
		\norm{\Phi_\gamma^n(v(\tilde{t}_n))-\Phi_\gamma^n(v_\gamma^n)}_{H^r}\leq \Big(1+\tau C \big(\norm{v(\tilde{t}_n)}_{H^r},\norm{v_\gamma^n}_{H^r}\big)\Big)\norm{v(\tilde{t}_n)-v_\gamma^n}_{H^r}.
	\end{equation}
\end{prop}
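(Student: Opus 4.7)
The target estimate is essentially a Lipschitz bound on the relaxed flow $\Phi_\gamma^n$ with Lipschitz constant $1+\mathcal{O}(\tau)$. Since $\Phi_\gamma^n(f)=f+\gamma_n\widehat{\Phi}_{\tilde{t}_n}^\tau(f)$ is an affine perturbation of the identity and $\gamma_n$ is the same scalar applied on both sides (it is computed from $v_\gamma^n$ but does not depend on the argument of $\Phi_\gamma^n$), the proof should be an essentially one-line triangle-inequality estimate, promoted by Assumption \ref{Assumption}(i) and Assumption \ref{Assumption2}(i).

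The plan is as follows. First I would write
\begin{equation*}
\Phi_\gamma^n(v(\tilde{t}_n))-\Phi_\gamma^n(v_\gamma^n)
=\bigl(v(\tilde{t}_n)-v_\gamma^n\bigr)+\gamma_n\bigl(\widehat{\Phi}_{\tilde{t}_n}^\tau(v(\tilde{t}_n))-\widehat{\Phi}_{\tilde{t}_n}^\tau(v_\gamma^n)\bigr),
\end{equation*}
so that the triangle inequality gives
\begin{equation*}
\norm{\Phi_\gamma^n(v(\tilde{t}_n))-\Phi_\gamma^n(v_\gamma^n)}_{H^r}
\le \norm{v(\tilde{t}_n)-v_\gamma^n}_{H^r}
+|\gamma_n|\,\norm{\widehat{\Phi}_{\tilde{t}_n}^\tau(v(\tilde{t}_n))-\widehat{\Phi}_{\tilde{t}_n}^\tau(v_\gamma^n)}_{H^r}.
\end{equation*}

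Next I would control the two ingredients on the right-hand side. For $|\gamma_n|$, Assumption \ref{Assumption2}(i) yields $|\gamma_n|\le 1+\tau^{m-1}C_1\bigl(\norm{u_0}_{L^2},\norm{v}_{L^\infty(0,T;H^{r+\beta})}\bigr)$, which, for $\tau\le\tau_0$ and $m\ge 2$, is majorized by a constant depending continuously on the same data. For the nonlinear-flow difference, Assumption \ref{Assumption}(i) delivers
\begin{equation*}
\norm{\widehat{\Phi}_{\tilde{t}_n}^\tau(v(\tilde{t}_n))-\widehat{\Phi}_{\tilde{t}_n}^\tau(v_\gamma^n)}_{H^r}
\le \tau L\bigl(\norm{v(\tilde{t}_n)}_{H^r},\norm{v_\gamma^n}_{H^r}\bigr)\,\norm{v(\tilde{t}_n)-v_\gamma^n}_{H^r}.
\end{equation*}

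Combining these, I would define
\begin{equation*}
C(x,y):=\bigl(1+\tau_0^{m-1}C_1\bigr)\,L(x,y),
\end{equation*}
which is continuous as the product of continuous functions, and conclude
\begin{equation*}
\norm{\Phi_\gamma^n(v(\tilde{t}_n))-\Phi_\gamma^n(v_\gamma^n)}_{H^r}
\le \bigl(1+\tau\,C(\norm{v(\tilde{t}_n)}_{H^r},\norm{v_\gamma^n}_{H^r})\bigr)\norm{v(\tilde{t}_n)-v_\gamma^n}_{H^r}.
\end{equation*}
There is no serious obstacle here: the only conceptual point to check is that the same $\gamma_n$ is used on both sides (so it factors out cleanly) and that Assumption \ref{Assumption2}(i) gives boundedness of $|\gamma_n|$ uniformly for $\tau\le\tau_0$, whence the $\tau^{m-1}$ correction can be absorbed into the constant of the leading $\tau$-term. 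The dependence of $C_1$ on $\norm{u_0}_{L^2}$ and $\norm{v}_{L^\infty(0,T;H^{r+\beta})}$ is harmless because these are fixed data quantities, and the two explicit arguments of $C$ in the conclusion are exactly those appearing in $L$.
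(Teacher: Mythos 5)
Your proposal is correct and follows essentially the same route as the paper's proof: the same affine decomposition of $\Phi_\gamma^n$, the triangle inequality, Assumption \ref{Assumption} (i) for the flow difference, and Assumption \ref{Assumption2} (i) to bound $|\gamma_n|$ by $1+C_1\tau^{m-1}$, absorbed into the $\mathcal{O}(\tau)$ term for $\tau\le\tau_0$. The only cosmetic difference is that the paper majorizes $1+C_1\tau^{m-1}$ by $2$ for sufficiently small $\tau$, whereas you keep the explicit factor $1+\tau_0^{m-1}C_1$; both yield the same conclusion.
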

\begin{proof}
	According to Assumption \ref{Assumption} (i), Assumption \ref{Assumption2} (i) and triangle inequality, we have 
	{\begin{align*}
			&\norm{\Phi_\gamma^n(v(\tilde{t}_n))-\Phi_\gamma^n(v_\gamma^n)}_{H^r}\\
			&\leq\norm{v(\tilde{t}_n)-v_\gamma^n}_{H^r}+\abs{\gamma_n}\norm{\widehat{\Phi}_{\tilde{t}_n}^\tau(v(\tilde{t}_n))-\widehat{\Phi}_{\tilde{t}_n}^\tau(v_\gamma^n)}_{H^r}\\
			&\leq \norm{v(\tilde{t}_n)-v_\gamma^n}_{H^r}+(1+C_1\tau^{m-1})\tau L\big(\norm{v(\tilde{t}_n)}_{H^r}, \norm{v_\gamma^n}_{H^r}\big) \norm{v(\tilde{t}_n)-v_\gamma^n}_{H^r},
	\end{align*}}where \(C_1:=C_1\left( \norm{u_0}_{L^2}, \norm{v}_{L^\infty(0,T;H^{r+\beta})} \right)\). 
	{Then for sufficiently small \(\tau\)}, it is easy to see
	\begin{align*}
		\norm{\Phi_\gamma^n(v(\tilde{t}_n))-\Phi_\gamma^n(v_\gamma^n)}_{H^r}\leq& \norm{v(\tilde{t}_n)-v_\gamma^n}_{H^r}\\
		&+2\tau L\big(\norm{v(\tilde{t}_n)}_{H^r}, \norm{v_\gamma^n}_{H^r}\big)\norm{v(\tilde{t}_n)-v_\gamma^n}_{H^r},
	\end{align*}which clearly implies the desired bound.
\end{proof}
For the sake of brevity, we define \[K_\delta=\norm{v}_{L^\infty(0,T;H^{\delta})},\quad \delta\geq 0,\]
and state the local error estimate as follows.
\begin{prop}[Local error]\label{Local_error}
	Under the conditions of Assumption \ref{Assumption} (ii) and Assumption \ref{Assumption2},  the local error is estimated as follows
	\begin{equation}
		\norm{v(\tilde{t}_{n+1})-\Phi_\gamma^n(v(\tilde{t}_n))}_{H^r}\leq \tau^{m+1}C\Big(\norm{u_0}_{L^2},\norm{v}_{L^\infty(0,T;H^{r+\beta})} \Big),\quad n\geq0.
	\end{equation}
\end{prop}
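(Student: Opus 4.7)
\emph{Plan.} The strategy is to split the local defect into a ``pure truncation'' piece, to which Assumption~\ref{Assumption}(ii) applies, and a ``relaxation consistency'' piece controlled by Assumption~\ref{Assumption2}(ii). Using $\tilde{t}_{n+1}=\tilde{t}_n+\gamma_n\tau$ together with $\Phi_\gamma^n(f)=f+\gamma_n\widehat{\Phi}_{\tilde{t}_n}^\tau(f)$, I would insert the compatible evaluation $\widehat{\Phi}_{\tilde{t}_n}^{\gamma_n\tau}(v(\tilde{t}_n))$ and write
\begin{align*}
v(\tilde{t}_{n+1})-\Phi_\gamma^n(v(\tilde{t}_n))
&= \bigl(v(\tilde{t}_n+\gamma_n\tau)-v(\tilde{t}_n)-\widehat{\Phi}_{\tilde{t}_n}^{\gamma_n\tau}(v(\tilde{t}_n))\bigr)\\
&\quad + \bigl(\widehat{\Phi}_{\tilde{t}_n}^{\gamma_n\tau}(v(\tilde{t}_n))-\gamma_n\widehat{\Phi}_{\tilde{t}_n}^{\tau}(v(\tilde{t}_n))\bigr).
\end{align*}

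The first bracket is precisely the local truncation error of the unrelaxed integrator at the exact value $v(\tilde{t}_n)$ with effective step $\gamma_n\tau$. Applying Assumption~\ref{Assumption}(ii) with $\gamma_n\tau$ in place of $\tau$ and $\tilde{t}_n$ in place of $t_n$ (admissible after possibly shrinking $\tau_0$, since $\gamma_n\to 1$), and bounding $\sup_{t\in[\tilde{t}_n,\tilde{t}_{n+1}]}\norm{v(t)}_{H^{r+\beta}}\le K_{r+\beta}$, delivers an $H^r$-bound of order $(\gamma_n\tau)^{m+1}M(K_{r+\beta})$. Assumption~\ref{Assumption2}(i) then gives $\gamma_n=1+\mathcal{O}(\tau^{m-1})$, so $\gamma_n$ is uniformly bounded on $(0,\tau_0]$ and $(\gamma_n\tau)^{m+1}\lesssim\tau^{m+1}$.

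The second bracket is controlled directly by Assumption~\ref{Assumption2}(ii), which yields an $H^r$-bound of order $\tau^{m+1}C_2(\norm{u_0}_{L^2},K_{r+\beta})$. Summing the two contributions via the triangle inequality and merging the factors $M(K_{r+\beta})$, the constants coming from Assumption~\ref{Assumption2}(i), and $C_2(\norm{u_0}_{L^2},K_{r+\beta})$ into a single continuous function $C(\norm{u_0}_{L^2},K_{r+\beta})$ yields precisely the claimed bound.

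The only delicate point is bookkeeping: one must check that the shifted step $\gamma_n\tau$ lies in the admissible range $(0,\tau_0]$ (shrinking $\tau_0$ if necessary) so that Assumption~\ref{Assumption}(ii) is genuinely applicable at $\tilde{t}_n$ with step $\gamma_n\tau$, and one has to read Assumption~\ref{Assumption2}(ii)---though phrased at the iterate $v_\gamma^n$---as an assertion about the scheme's increment evaluated at $v(\tilde{t}_n)$, a substitution justified because $\norm{v(\tilde{t}_n)}_{L^2}=\norm{u_0}_{L^2}$ by conservation of the continuous equation. Once these admissibility points are in place, the argument is pure triangle-inequality bookkeeping and poses no serious obstacle.
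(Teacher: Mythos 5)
Your proposal is correct and follows essentially the same route as the paper: the same splitting into the truncation error of the unrelaxed flow with effective step $\gamma_n\tau$ (bounded via Assumption~\ref{Assumption}(ii) together with $\gamma_n = 1 + \mathcal{O}(\tau^{m-1})$ from Assumption~\ref{Assumption2}(i)) plus the relaxation-consistency term $\gamma_n\widehat{\Phi}_{\tilde{t}_n}^{\tau}-\widehat{\Phi}_{\tilde{t}_n}^{\gamma_n\tau}$ evaluated at $v(\tilde{t}_n)$ (bounded by Assumption~\ref{Assumption2}(ii)). Your bookkeeping remarks about the admissible step range and about reading Assumption~\ref{Assumption2}(ii) at the exact solution rather than the iterate are points the paper glosses over, and they are handled correctly.
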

\begin{proof}
	Using Assumption \ref{Assumption} (ii), Assumption \ref{Assumption2} (i) and for sufficiently small \(\tau\), we have 
	{\begin{align*}
			\norm{\mathcal{R}^{\gamma_n\tau}_{\tilde{t}_n}}_{H^r}&=\norm{v(\tilde{t}_{n+1})-(v(\tilde{t}_n)+\widehat{\Phi}_{\tilde{t}_n}^{\gamma_n\tau}(v(\tilde{t}_{n})))}_{H^r}\\
			&\leq (\abs{\gamma_n}\tau)^{m+1}M\Big(\sup_{t\in[0,\gamma_n\tau]}\lVert v(\tilde{t}_n+t)\rVert_{H^{r+\beta}} \Big)\\
			&\leq ((\abs{\gamma_n-1}+1)\tau)^{m+1}M\Big(\sup_{t\in[0,\gamma_n\tau]}\lVert v(\tilde{t}_n+t)\rVert_{H^{r+\beta}} \Big)\\
			&\leq ((1+C_1\tau^{m-1})\tau)^{m+1}M\Big(\sup_{t\in[0,\gamma_n\tau]}\lVert v(\tilde{t}_n+t)\rVert_{H^{r+\beta}} \Big)\\
			&\leq 2\tau^{m+1}M\Big(\sup_{t\in[0,\gamma_n\tau]}\lVert v(\tilde{t}_n+t)\rVert_{H^{r+\beta}} \Big).
	\end{align*}}
	Then by triangle inequality and Assumption \ref{Assumption2} (ii), one deduces 
	\begin{align*}
		\norm{v(\tilde{t}_{n+1})-\Phi_\gamma^n(v(\tilde{t}_n))}_{H^r}&\leq  \norm{\mathcal{R}^{\gamma_n\tau}_{\tilde{t}_n}}_{H^r}+\norm{\gamma_n\widehat{\Phi}_{\tilde{t}_n}^\tau(v(\tilde{t}_n))-\widehat{\Phi}_{\tilde{t}_n}^{\gamma_n\tau}(v(\tilde{t}_n))}_{H^r}\\
		&\leq\tau^{m+1}C\Big(\norm{u_0}_{L^2},K_{r+\beta} \Big),
	\end{align*}
	which concludes the stated local error bound.
\end{proof}

To complete the convergence analysis, we now turn to the derivation of the global error estimate by combining the previously obtained local error bound with the stability result.

\begin{mytheo}[Global error]\label{global_error}
	Let \(u_\gamma^n\) be the numerical solution from the schemes RLRIs-v \eqref{RLRIs-v} for the NLS equation \eqref{general_NLS} up to some fixed time \(T > 0\). If \(\widehat{\Phi}_{t}^\tau\) satisfies Assumptions \ref{Assumption} and Assumption \ref{Assumption2} (ii), and \(\gamma_n\) satisfies Assumption \ref{Assumption2} (i), then for all \(0<\tau\leq\tau_0\) with \(\tau_0>0\) sufficiently small, we have
	\[\norm{u(\tilde{t}_n)-u_\gamma^n}_{H^r}\leq C\tau^m,\]
	where \(\tau_0,\,C\) only depend on \(\norm{u_0}_{L^2},\ \norm{u}_{L^\infty(0,T;H^{r+\beta})}\).
\end{mytheo}
{\begin{proof}
{As \(\e^{it\Delta}\) is a linear isometry on \(H^r\) for all \(t \in\mathbb{R}\), it suffices to show that 
\[\norm{v(\tilde{t}_n)-v_\gamma^n}_{H^r}\leq C\tau^m.\] We divide the proof into two parts.}

\textbf{(I) The verification of assumptions.}

The above convergence is based on Assumptions \ref{Assumption2}, which actually holds for a large class low regularity integrators. In this subsection, we present the rigorous proof of Assumptions \ref{Assumption2} to further demonstrate the effectiveness of RLRIs-v.

\noindent\textbf{\underline{Verification of Assumption \ref{Assumption2} (i).}}

For Assumption \ref{Assumption2} (i), from Theorem \ref{Conservation}, we know that the fact \(\norm{v_\gamma^n}_{L^2}=\norm{u_0}_{L^2}\neq
0\) holds for \(n\geq 0\). However, it needs to be proved that RLRIs-v are still of order-\(m\) under the same rough data as LRIs.
In order to certify this reality, we locally assume \(v_\gamma^n=v(\tilde{t}_n)\).

On the one hand, it follows that \(\tilde{v}^{n+1}:=v_\gamma^n+\Phi_{\tilde{t}_n}^\tau(v_\gamma^n)\) is an \((m+1)\)th-order approximation to \(v(\tilde{t}_n+\tau)\), which gives
\begin{align*}
	\big|\norm{\tilde{v}^{n+1}}^2_{L^2}-\norm{v_\gamma^n}^2_{L^2}\big|&=\left(\norm{\tilde{v}^{n+1}}_{L^2}+\norm{v_\gamma^n}_{L^2}\right)\big|\norm{\tilde{v}^{n+1}}_{L^2}-\norm{v_\gamma^n}_{L^2}\big|\\
	&\leq\left(\norm{\tilde{v}^{n+1}}_{L^2}+\norm{v_\gamma^n}_{L^2}\right)\norm{\tilde{v}^{n+1}-v(\tilde{t}_n+\tau)}_{L^2}\\
	&\leq\left(\norm{\tilde{v}^{n+1}}_{L^2}+\norm{v_\gamma^n}_{L^2}\right)C\tau^{m+1},
\end{align*}
where Assumption \ref{Assumption} (ii) is applied in the last inequality, and the constant \(C\) depends on \(\sup_{t \in [0, \tau]} \norm{v(\tilde{t}_n+t)}_{H^{r+\beta}}\).
Then for sufficiently small \(\tau\), by triangle inequality, we obtain
\[\norm{\tilde{v}^{n+1}}_{L^2}\leq\norm{v(\tilde{t}_n+\tau)}_{L^2}+\norm{\tilde{v}^{n+1}-v(\tilde{t}_n+\tau))}_{L^2}\leq \norm{u_0}_{L^2}+1.\]
Accordingly, one gets
\[\big|\norm{\tilde{v}^{n+1}}^2_{L^2}-\norm{v_\gamma^n}^2_{L^2}\big|\leq C\tau^{m+1},\]
where \(C\) depends on \(\norm{u_0}_{L^2}\) and \(\sup_{t \in [0, \tau]} \norm{v(\tilde{t}_n+t)}_{H^{r+\beta}}.\)

{On the other hand, by local assumption  \(v_\gamma^n=v(\tilde{t}_n)\), Assumption \ref{Assumption} (i) and Taylor expansion, we have
	\begin{align*}
		\widehat{\Phi}_{\tilde{t}_n}^\tau(v_\gamma^n)&=\tilde{v}^{n+1}-v_\gamma^n=v(\tilde{t}_n+\tau)-v(\tilde{t}_n)+\mathcal{R}_n^\tau\\
		&=\tau\partial_tv(\tilde{t}_n)+\int_{0}^{\tau}\partial_{tt}v(\tilde{t}_n+s)(\tau-s)\mathrm{d}s+\mathcal{R}_n^\tau,
	\end{align*}
	where \(\mathcal{R}_n^\tau=\tilde{v}^{n+1}-v(\tilde{t}_n+\tau)\).
	It can be easily computed that 
	\begin{align*}
		\partial_{tt}v(t)&=-\lambda\Delta \e^{-it\Delta}\big(\abs{\e^{it\Delta}v(t)}^{2p}\e^{it\Delta}v(t)\big)+\lambda (p+1)\e^{-it\Delta}\big(\abs{\e^{it\Delta}v(t)}^{2p}(\Delta \e^{it\Delta}v(t))\big)\\
		&\quad\, -\lambda p\e^{-it\Delta}\big(\abs{\e^{it\Delta}v(t)}^{2p-2}(\e^{it\Delta}v(t))^2(\Delta \e^{-it\Delta}\bar{v}(t))\big)\\
		&\quad\,-\lambda^2\e^{-it\Delta}\abs{\e^{it\Delta}v(t)}^{4p}\e^{it\Delta}v(t).
	\end{align*}
	Upon the above equations and Lemma \ref{bilinear_estimates}, we get 
	\[\big\|\widehat{\Phi}_{\tilde{t}_n}^\tau(v_\gamma^n)-\tau\partial_tv(\tilde{t}_n)\big\|_{L^2}\leq \tau^2C_*\big(\sup_{t \in [0, \tau]} \norm{v(\tilde{t}_n+t)}_{H^{r+\beta}}\big),\]
	where we use \(m\geq 2,\,r+\beta\geq 2\).
	Based on Cauchy-Schwarz inequality, for sufficiently small \(0<\tau<1\) satisfying \(\tau\big(2C_*\norm{\partial_tv(\tilde{t}_n)}_{L^2}+ C_*^2\big)\leq \frac{1}{2}\norm{\partial_tv(\tilde{t}_n)}^2_{L^2}\), one obtains
	\begin{align*}
		\abs{\mathcal{Q}_n}&:=\Big|2\operatorname{Re}\langle\widehat{\Phi}_{\tilde{t}_n}^\tau(v_\gamma^n)-\tau\partial_tv(\tilde{t}_n),\tau\partial_tv(\tilde{t}_n)\rangle+
		\norm{\widehat{\Phi}_{\tilde{t}_n}^\tau(v_\gamma^n)-\tau\partial_tv(\tilde{t}_n)}^2_{L^2}\Big|\\
		&\leq \tau^3\Big[2C_*\norm{\partial_tv(\tilde{t}_n)}_{L^2}+\tau C_*^2\Big]\leq \frac{1}{2}\tau^2\norm{\partial_tv(\tilde{t}_n)}^2_{L^2}.
	\end{align*}
    Furthermore, by observing the twisted equation \eqref{general_NLS_v} and applying the {embedding \(L^{2(2p+1)} \hookrightarrow L^2 \  (\text{for } p \geq 1)\)}, we are led to
	\begin{equation*}
		\begin{aligned}
			\norm{\partial_tv(\tilde{t}_n)}^2_{L^2}&=\lambda^2\norm{ (v(\tilde{t}_n))^{p+1}(\overline{v(\tilde{t}_n)})^p}^2_{L^2}\\
			&=\lambda^2\norm{v(\tilde{t}_n)}^{2(2p+1)}_{L^{2(2p+1)}}\geq\lambda^2 ((2\pi)^{-d})^{2p}\norm{v(\tilde{t}_n)}^{2(2p+1)}_{L^2}.
		\end{aligned}
	\end{equation*}
	Consequently, noticing that \(\norm{v(\tilde{t}_n)}_{L^2}=\norm{u_0}_{L^2}\), it is derived that
	\begin{align*}
		\norm{\widehat{\Phi}_{\tilde{t}_n}^\tau(v_\gamma^n)}^2_{L^2}&=\tau^2\norm{\partial_tv(\tilde{t}_n)}_{L^2}^2+\mathcal{Q}_n\\&\geq \frac{1}{2}\tau^2\norm{\partial_tv(\tilde{t}_n)}^2_{L^2}\geq \frac{1}{2}\tau^2\lambda^2 ((2\pi)^{-d})^{2p}\norm{u_0}^{2(2p+1)}_{L^2}.
	\end{align*}
	In the end,  the following bound occurs:
	\begin{equation}
		\abs{\gamma_n-1}\leq C\tau^{m-1},
	\end{equation}
	with \(C:=C\Big(\norm{u_0}_{L^2},\norm{v}_{L^\infty(0,T;H^{r+\beta})}\Big).\) }This confirms that Assumption \ref{Assumption2} (i) can be satisfied.

\noindent\textbf{\underline{Verification of Assumption \ref{Assumption2} (ii).}}

{The verification of Assumption \ref{Assumption2} (ii), which plays a key role in establishing the local error of RLRIs-v, must be carried out on a case-by-case basis. In this part, we present a detailed local error analysis for RLRI1-v as a representative case.}
In fact, we only need to prove Assumption \ref{Assumption2} (ii)  for RLRI1-v, and then local error result follows from Proposition \ref{Local_error}.

Letting \(\widehat{\Phi}_{\tilde{t}_n}=\psi_{\tilde{t}_n}^\tau\), which is defined as \eqref{psi_t}, we know 
\begin{align*}
	&\gamma_n\psi_{\tilde{t}_n}^\tau(f)-\psi_{\tilde{t}_n}^{\gamma_n\tau}(f)\\
	&=-i\gamma_n\lambda\tau \e^{-i\tilde{t}_n\Delta}\left[ (\e^{i\tilde{t}_n\Delta}f)^{2}\cdot\left(\varphi_1(-2i\tau\Delta)-\varphi_2(-2i\tau\Delta)\right)(\e^{-i\tilde{t}_n\Delta}\bar{f}) \right] \\
	&\quad-i\gamma_n\lambda\tau \e^{-i(\tilde{t}_n+\tau)\Delta}\left[ (\e^{i(\tilde{t}_n+\tau)\Delta}f)^{2}\cdot\varphi_2(-2i\tau\Delta)(\e^{-i(\tilde{t}_n-\tau)\Delta}\bar{f}) \right]\\
	&\quad+i\gamma_n\lambda\tau \e^{-i\tilde{t}_n\Delta}\left[ (\e^{i\tilde{t}_n\Delta}f)^{2}\cdot\left( \varphi_1(-2i\gamma_n\tau\Delta)-\varphi_2(-2i\gamma_n\tau\Delta)\right)(\e^{-i\tilde{t}_n\Delta}\bar{f}) \right] \\
	&\quad +i\gamma_n\lambda\tau \e^{-i(\tilde{t}_n+\gamma_n\tau)\Delta}\left[ (\e^{i(\tilde{t}_n+\gamma_n\tau)\Delta}f)^{2}\cdot\varphi_2(-2i\gamma_n\tau\Delta)(\e^{-i(\tilde{t}_n-\gamma_n\tau)\Delta}\bar{f}) \right]\\
	&\quad+\gamma_n(\gamma_n-1)\lambda^2\frac{\tau^2}{2}\e^{-i\tilde{t}_n\Delta}\left[ \abs{\e^{i\tilde{t}_n\Delta}f}^{4} (\e^{i\tilde{t}_n\Delta}f)\right]\\
	&:=\mathcal{\tilde{R}}_1^n(f)+\mathcal{\tilde{R}}_2^n(f)+\mathcal{\tilde{R}}_3^n(f),
\end{align*}
where \(\gamma_n\) and \(\tilde{t}_n\) are defined as Second \ref{general construction}, and
\begin{align*}
	\mathcal{\tilde{R}}_1^n(f)&=i\gamma_n\lambda\tau \e^{-i\tilde{t}_n\Delta}\Big[ (\e^{i\tilde{t}_n\Delta}f)^{2}\cdot\left( \varphi_1(-2i\gamma_n\tau\Delta)-\varphi_1(-2i\tau\Delta)\right)(\e^{-i\tilde{t}_n\Delta}\bar{f}) \\
	&\qquad\qquad\qquad\quad -(\e^{i\tilde{t}_n\Delta}f)^{2}\cdot\left(\varphi_2(-2i\gamma_n\tau\Delta)+\varphi_2(-2i\tau\Delta)\right)(\e^{-i\tilde{t}_n\Delta}\bar{f})\Big], \\
	\mathcal{\tilde{R}}_2^n(f)&=i\gamma_n\lambda\tau \e^{-i(\tilde{t}_n+\gamma_n\tau)\Delta}\left[ (\e^{i(\tilde{t}_n+\gamma_n\tau)\Delta}f)^{2}\cdot\varphi_2(-2i\gamma_n\tau\Delta)(\e^{-i(\tilde{t}_n-\gamma_n\tau)\Delta}\bar{f}) \right]\\
	&\quad-i\gamma_n\lambda\tau \e^{-i(\tilde{t}_n+\tau)\Delta}\left[ (\e^{i(\tilde{t}_n+\tau)\Delta}f)^{2}\cdot\varphi_2(-2i\tau\Delta)(\e^{-i(\tilde{t}_n-\tau)\Delta}\bar{f}) \right],\\
	\mathcal{\tilde{R}}_3^n(f)&=\gamma_n(\gamma_n-1)\lambda^2\frac{\tau^2}{2}\e^{-i\tilde{t}_n\Delta}\left[ \abs{\e^{i\tilde{t}_n\Delta}f}^{4} (\e^{i\tilde{t}_n\Delta}f)\right].
\end{align*}
Then we have 
\begin{align*}
	\mathcal{F}\Big(\tilde{\mathcal{R}}^n_1(f)\Big)(\boldsymbol{\xi})
	&=i\gamma_n\lambda\tau\int_{\boldsymbol{\xi}=\boldsymbol{\xi}_1+\boldsymbol{\xi}_2+\boldsymbol{\xi}_3}\e^{i\tilde{t}_n\alpha_3}\Big[\left( \varphi_1(2i\gamma_n\tau\abs{\boldsymbol{\xi}_3}^2)-\varphi_1(2i\tau\abs{\boldsymbol{\xi}_3}^2)\right)\\
	&\quad \qquad-\left( \varphi_2(2i\gamma_n\tau\abs{\boldsymbol{\xi}_3}^2)-\varphi_2(2i\tau\abs{\boldsymbol{\xi}_3}^2)\right)\Big]\hat{f}(\boldsymbol{\xi}_1)\hat{f}(\boldsymbol{\xi}_2)\hat{\bar{f}}(\boldsymbol{\xi}_3)(\mathrm{d}\boldsymbol{\xi}_1)(\mathrm{d}\boldsymbol{\xi}_2),
\end{align*}
where \(\alpha_3=\abs{\boldsymbol{\xi}}^2-\abs{\boldsymbol{\xi}_1}^2-\abs{\boldsymbol{\xi}_2}^2+\abs{\boldsymbol{\xi}_3}^2\). Using Lemma \ref{inequalities} yields 
\begin{align*}
	\Big|\mathcal{F}\Big(\tilde{\mathcal{R}}^n_1(f)\Big)(\boldsymbol{\xi})\Big|&\lesssim \abs{\lambda\gamma_n(\gamma_n-1)}\tau^2\int_{\boldsymbol{\xi}=\boldsymbol{\xi}_1+\boldsymbol{\xi}_2+\boldsymbol{\xi}_3}\abs{\boldsymbol{\xi}_3}^2\abs{\hat{f}(\boldsymbol{\xi}_1)\hat{f}(\boldsymbol{\xi}_2)\hat{\bar{f}}(\boldsymbol{\xi}_3)}(\mathrm{d}\boldsymbol{\xi}_1)(\mathrm{d}\boldsymbol{\xi}_2)\\
	&\lesssim \abs{\lambda\gamma_n(\gamma_n-1)}\tau^2 \mathcal{F}\Big(\tilde{f}^2\cdot(-\Delta \tilde{f})\Big)(\boldsymbol{\xi}),
\end{align*}
where \(\tilde{f}\) refers to the function with \(\mathcal{F}(\tilde{f})(\boldsymbol{\xi})=\abs{\hat{f}(\boldsymbol{\xi})}\).
Combining the last inequality with Lemma \ref{bilinear_estimates} gives
\[\norm{\tilde{\mathcal{R}}^n_1(f)}_{H^r}\lesssim \abs{\lambda\gamma_n(\gamma_n-1)}\tau^2\norm{\tilde{f}}_{H^{r+2}}^3\lesssim \abs{\lambda\gamma_n(\gamma_n-1)}\tau^2\norm{f}_{H^{r+2}}^3.\]
In a similar way, we obtain
\[\norm{\tilde{\mathcal{R}}^n_2(f)}_{H^r}\lesssim \abs{\lambda\gamma_n(\gamma_n-1)}\tau^2\norm{f}_{H^{r+2}}^3,\quad\norm{\tilde{\mathcal{R}}^n_3(f)}_{H^r}\lesssim \abs{\lambda^2\gamma_n(\gamma_n-1)}\tau^2\norm{f}_{H^{r+2}}^5.\]
Finally, it is obtained that
\[\norm{\gamma_n\psi_{\tilde{t}_n}^\tau(f)-\psi_{\tilde{t}_n}^{\gamma_n\tau}(f)}_{H^r}\leq \tau^3C_2\left( \norm{u_0}_{L^2},K_{r+2} \right),\]
which completes the proof of Assumption \ref{Assumption2} (ii), for RLRI1-v.

\textbf{(II) Error bounds.}

		Letting \(N\) be the total number of time steps, we consider the two cases \(\tilde{t}_N= T\) and \(T-\gamma_{N-1}\tau<\tilde{t}_{N-1}< T\), separately.
		Define the error function \(e_\gamma^k:=v(\tilde{t}_k)-v_\gamma^k,\, 0\leq k\leq N\).
		
		\textbf{Case 1: \(\tilde{t}_N= T\).} We shall carry out an induction proof on the boundedness of the numerical solution
		\[\norm{v_\gamma^n}_{H^r}\leq K_r+1,\quad 0\leq n\leq N,\]
		i.e., \(v_\gamma^n\in H^r.\) It obviously holds for \(n=0\) since we choose  \(v_\gamma^0=v(0)=u_0\). Suppose it is correct for \(0\leq n \leq N-1\), and we aim for the validity at \(N\).
		With the aid of the triangle inequality, we obtain for \( 0 \leq n \leq N - 1 \),
		\begin{equation}\label{general_local_error1}
			\norm{e_\gamma^{n+1}}_{H^r}\leq \norm{v(\tilde{t}_{n+1})-\Phi_\gamma^n(v(\tilde{t}_n))}_{H^r}+\norm{\Phi_\gamma^n(v(\tilde{t}_n))-\Phi_\gamma^n(v_\gamma^n)}_{H^r}.
		\end{equation}
		Thus, utilizing Propositions \ref{Stability} and \ref{Local_error}, it can be concluded that the regularity assumption \( v_\gamma^n \in H^r \) for all \( 0 \leq n \leq N-1 \) ensures the following result
		\begin{align*}
			\norm{e_\gamma^{n+1}}_{H^r}&\leq C\left(\norm{u_0}_{L^2},K_{r+\beta}\right)\tau^{m+1}+\left(1+\tau C\big(K_r,\norm{v_\gamma^n}_{H^r}\big)\right)\norm{e_\gamma^n}_{H^r}\\
			&\leq \tilde{C}\tau^{m+1}+(1+\widehat{C}\tau)\big(\tilde{C}\tau^{m+1}+(1+\widehat{C}\tau)\norm{e_\gamma^{n-1}}_{H^r}\big)\\
			&\leq \tilde{C}T\e^{\widehat{C}T}\tau^m,
		\end{align*}
		where \(\tilde{C}:=C\left(\norm{u_0}_{L^2},K_{r+\beta}\right)\),  \(\widehat{C}:=C\big(K_r,\norm{v_\gamma^n}_{H^r}\big)\) and the last inequality follows from the discrete Gronwall's lemma. We consequently obtain
		\[\norm{v_\gamma^N}_{H^r}\leq \norm{v(T)}_{H^r}+\norm{e_\gamma^N}_{H^r}\leq K_r+1,\]  and the induction is done which ends the proof.
		
		\textbf{Case 2: \(T-\gamma_{N-1}\tau<\tilde{t}_{N-1}< T\).} Similarly to Case 1, we also have \eqref{general_local_error1} for \(0\leq n\leq N-2\). For the last time step, let \(\tau_N=T-\tilde{t}_{N-1}\). Then it is obvious that \(\tau_N\leq 2\tau\) according to the estimation of \(\gamma_{N-1}\). Applying the original method satisfying Assumption \ref{Assumption} to approximate the endpoint \(T\) immediately gives  
		\begin{align*}
			\norm{e^N}_{H^r}:=&\,\norm{v(\tilde{t}_{n}+\tau_N)-\Phi^{\tau_N}_{\tilde{t}_n}(v_\gamma^n)}_{H^r}\quad (n=N-1)\\
			\leq&\, \norm{v(\tilde{t}_{n}+\tau_N)-\Phi^{\tau_N}_{\tilde{t}_n}(v(\tilde{t}_n))}_{H^r}+\norm{\Phi^{\tau_N}_{\tilde{t}_n}(v(\tilde{t}_n))-\Phi_{\tilde{t}_n}^{\tau_N}(v_\gamma^n)}_{H^r}\\
			\leq&\, M\left(K_{r+\beta}\right)\tau_N^{m+1}+\big(1+\tau_N L\,(\norm{v(\tilde{t}_n)}_{H^r}, \norm{v_\gamma^n}_{H^r})\big) \norm{e_\gamma^n}_{H^r}.
		\end{align*}
		Combining the above inequality with \eqref{general_local_error1} for \(0 \leq n \leq N-2\), and applying the discrete Gronwall's inequality under the induction assumption \(\norm{v_\gamma^n}_{H^r}\leq K_r+1\) for \(0 \leq n \leq N-1\), as established in Case~1, we obtain the desired error bound.
\end{proof}}

\subsection{Order reduction in the relaxation of \(u\)}\label{appen} 
It seems that using \(v\)-relaxation instead of directly relaxing \(u\) might be a roundabout approach.
However, if we add the relaxation parameter \(\gamma_n\) to the low regularity integrators for \(u\), then it may lead to order reduction under the same low regularity assumption as original scheme. 
In this part, we show the case where we face with order reduction to verify the rationality of \(v\)-relaxation, and we set the second order method \eqref{u_NLS} with \eqref{Psi_1} for cubic NLS \eqref{eq_u_cubic_NLS} with \(\lambda=1\) as an example to rigorously prove this fact.

The relaxation of \(u\) for scheme \eqref{u_NLS} with \(\lambda=1\), abbreviated as `RLRI-u', can be written as 
\begin{equation}\label{RLRI-u}
	\begin{aligned}
    u_\gamma^{n+1}:=&\,u_\gamma^n+\gamma_n\left(\Psi^\tau(u^n_\gamma)-u^n_\gamma\right),\quad n\geq0.
	\end{aligned}
\end{equation}
Here and after, \(\Psi^\tau\) is abbreviation of \(\Psi^\tau_{1,2}\) in \eqref{Psi_1}.

Then we give the function: \[S(\gamma)=\int_{\mathbb{T}^d}\big|u_\gamma^n+\gamma\left(\Psi^\tau(u^n_\gamma)-u^n_\gamma\right)\big|^2-\abs{u_\gamma^n}^2\mathrm{d}\mathbf{x},\] whose non-zero root is the desired \(\gamma_n\) for \eqref{RLRI-u}. Following the same steps as Subsection  \ref{general construction}, one can respectively prove the well-posedness of scheme \eqref{RLRI-u} as
\begin{equation}\label{gamma_n_u}
	\gamma_n=\begin{cases}
		1-\frac{\norm{\Psi^\tau(u_\gamma^n)}^2_{L^2}-\norm{u_\gamma^n}^2_{L^2}}{\norm{\Psi^\tau(u_\gamma^n)-u_\gamma^n}^2_{L^2}},\quad & \norm{\Psi^\tau(u_\gamma^n)-u_\gamma^n}_{L^2}\neq0,\\
		1,\quad &\norm{\Psi^\tau(u_\gamma^n)-u_\gamma^n}_{L^2}=0,
	\end{cases}
\end{equation}
and derive the estimation of \(\gamma_n\):
\begin{equation}\label{estimate_gamma_u}
    \abs{\gamma_n-1}\leq C\tau,
\end{equation}
where \(C\) depends on \({\norm{u_0}_{L^2}},\,\norm{u}_{L^\infty(0,T;H^{r+2})}.\) Here we omit it for brevity. Now we are in the position to prove the convergence of the new scheme \eqref{RLRI-u}.

To begin with, letting \(\tau_n:=\gamma_n\tau\) as the \(n\)th time step size and using scheme \eqref{u_NLS} to approximate \(u(\tilde{t}_n + \gamma_n\tau)\) as
\begin{equation}\label{exact_u_NLS}
	u(\tilde{t}_n + \gamma_n\tau) = \Psi^{\gamma_n\tau}(u(\tilde{t}_n))+\mathcal{E}^{\gamma_n\tau}_{\tilde{t}_n}, 
\end{equation}
where \(\norm{\mathcal{E}^{\gamma_n\tau}_{\tilde{t}_n}}_{H^r}\leq C\tau^3\), and \(C\) depends on \(\sup_{t\in[0,\gamma_n\tau]}\lVert u(\tilde{t}_n+t)\rVert_{H^{r+2}}.\)
By inserting the exact solution into the scheme \eqref{RLRI-u}, one gets
\begin{equation}\label{exact_u_NLS_into_numerical}
	u(\tilde{t}_n + \gamma_n\tau) =u(\tilde{t}_n)+\gamma_n\left(\Psi^\tau(u(\tilde{t}_n))-u(\tilde{t}_n)\right)+ \tilde{\mathcal{E}}^n(u(\tilde{t}_n)).
\end{equation}
Subtracting  \eqref{exact_u_NLS_into_numerical} from \eqref{exact_u_NLS}, we obtain 
\begin{equation*}
	\begin{aligned}
		\tilde{\mathcal{E}}^n(u(\tilde{t}_n))=&\, \Psi_{\gamma_n\tau}(u(\tilde{t}_n))+\mathcal{E}^{\gamma_n\tau}_{\tilde{t}_n}-u(\tilde{t}_n)-\gamma_n\left(\Psi^\tau(u(\tilde{t}_n))-u(\tilde{t}_n)\right)\\
		=&\,(\e^{i \gamma_n\tau \Delta}-1)u(\tilde{t}_n) -\gamma_n (\e^{i\tau\Delta}-1)u(\tilde{t}_n)
		+\mathcal{E}^{\gamma_n\tau}_{\tilde{t}_n}\\
		&\,-i\gamma_n\tau \e^{i\gamma_n\tau\Delta}\big[ (u(\tilde{t}_n))^2 \cdot\left( \varphi_1(-2i\gamma_n\tau\Delta)-\varphi_2(-2i\gamma_n\tau\Delta)\right)\overline{u(\tilde{t}_n)}\big] \\
		&\,-i\gamma_n\tau (\e^{i\gamma_n\tau\Delta}u(\tilde{t}_n))^2\cdot\big(\e^{i\gamma_n\tau\Delta}\varphi_2(-2i\gamma_n\tau\Delta)\overline{u(\tilde{t}_n)} \big)\\
		&\,-\frac{(\gamma_n\tau)^2}{2}\e^{i\gamma_n\tau\Delta}\big[\abs{u(\tilde{t}_n)}^4 u(\tilde{t}_n)\big]\\
		&\,+ i\gamma_n\tau \e^{i\tau\Delta}\big[ (u(\tilde{t}_n))^2\cdot( \varphi_1(-2i\tau\Delta)-\varphi_2(-2i\tau\Delta))\overline{u(\tilde{t}_n)} \big]\\
		&\,+\gamma_n\Big[i\tau (\e^{i\tau\Delta}u(\tilde{t}_n))^2\cdot\big(\e^{i\tau\Delta}\varphi_2(-2i\tau\Delta)\overline{u(\tilde{t}_n)} \big) +\frac{\tau^2}{2}\e^{i\tau\Delta}\big[\abs{u(\tilde{t}_n)}^4 u(\tilde{t}_n)\big]\Big]\\:=&\,\tilde{\mathcal{E}}^n_1(u(\tilde{t}_n))+\tilde{\mathcal{E}}^n_2(u(\tilde{t}_n))+\tilde{\mathcal{E}}^n_3(u(\tilde{t}_n))+\tilde{\mathcal{E}}^n_4(u(\tilde{t}_n))+\mathcal{E}^{\gamma_n\tau}_{\tilde{t}_n},
	\end{aligned}
\end{equation*}
where \begin{align*}
	\tilde{\mathcal{E}}^n_1(f)&=\big[(\e^{i \gamma_n\tau \Delta}-1)-\gamma_n (\e^{i\tau\Delta}-1)\big]f,\\
	\tilde{\mathcal{E}}^n_2(f)&=i\gamma_n\tau\Big[ \e^{i\tau\Delta}\big[  f^2\cdot\left( \varphi_1(-2i\tau\Delta)-\varphi_2(-2i\tau\Delta)\right)\bar{f} \big]\\
	&\quad- \e^{i\gamma_n\tau\Delta}\big[ f^2\cdot\left( \varphi_1(-2i\gamma_n\tau\Delta)-\varphi_2(-2i\gamma_n\tau\Delta)\right)\bar{f} \big]\Big] \\
	&=i\gamma_n\tau (\e^{i\tau\Delta}-\e^{i\gamma_n\tau\Delta})\big[  f^2\cdot\left( \varphi_1(-2i\tau\Delta)-\varphi_2(-2i\tau\Delta)\right)\bar{f} \big]\\
	&\quad+i\gamma_n\tau \e^{i\gamma_n\tau\Delta}\big[ f^2\cdot( \varphi_1(-2i\tau\Delta)-\varphi_2(-2i\tau\Delta)- \varphi_1(-2i\gamma_n\tau\Delta)\\
	&\qquad\qquad\qquad\qquad\qquad\qquad\qquad\qquad\qquad\quad\  \ +\varphi_2(-2i\gamma_n\tau\Delta))\bar{f} \big]\\
	&:=\tilde{\mathcal{E}}^n_{2,1}(f)+\tilde{\mathcal{E}}^n_{2,2}(f),\\
	\tilde{\mathcal{E}}^n_3(f)&=i\gamma_n\tau\Big[ (\e^{i\tau\Delta}f)^2\cdot\big(\e^{i\tau\Delta}\varphi_2(-2i\tau\Delta)\bar{f} \big)- (\e^{i\gamma_n\tau\Delta}f)^2\cdot\big(\e^{i\gamma_n\tau\Delta}\varphi_2(-2i\gamma_n\tau\Delta)\bar{f} \big)\Big],\\
	\tilde{\mathcal{E}}^n_4(f)&=\gamma_n\frac{\tau^2}{2}\Big[\e^{i\tau\Delta}\big[\abs{f}^4 f\big]-\gamma_n\e^{i\gamma_n\tau\Delta}\big[\abs{f}^4 f\big]\Big]\\
	&=\gamma_n\frac{\tau^2}{2}\Big[(\e^{i\tau\Delta}-\e^{i\gamma_n\tau\Delta}-(\gamma_n-1)\e^{i\gamma_n\tau\Delta})\big[\abs{f}^4 f\big]\Big].
\end{align*}
Using \(\abs{\e^{ix}-1}\leq \abs{x}\) and \(\abs{\e^{ix}-\e^{iy}}\leq\abs{x-y}\) for any \(x,y\in\mathbb{R}\) yields
\begin{align*}
\langle\boldsymbol{\xi}\rangle^r\Big|\mathcal{F}\Big(\tilde{\mathcal{E}}^n_1(f)\Big)(\boldsymbol{\xi})\Big|&=\langle\boldsymbol{\xi}\rangle^r\big|\big[(\e^{-i \gamma_n\tau \abs{\boldsymbol{\xi}}^2}-1)-\gamma_n (\e^{-i\tau \abs{\boldsymbol{\xi}}^2}-1)\big]\hat{f}(\boldsymbol{\xi})\big|\\
	&=\langle\boldsymbol{\xi}\rangle^r\abs{(\e^{-i \gamma_n\tau \abs{\boldsymbol{\xi}}^2}-1)(1-\gamma_n)+\gamma_n (\e^{-i \gamma_n\tau \abs{\boldsymbol{\xi}}^2}-\e^{-i\tau \abs{\boldsymbol{\xi}}^2})}\abs{\hat{f}(\boldsymbol{\xi})}\\
	&\lesssim \abs{\gamma_n-1}\abs{\gamma_n}\tau\abs{\boldsymbol{\xi}}^{r+2}\abs{\hat{f}(\boldsymbol{\xi})}\lesssim \abs{\gamma_n-1}\abs{\gamma_n}\tau \Big|\mathcal{F}\Big((-\Delta)^{r/2+1}f\Big)(\boldsymbol{\xi})\Big|.
\end{align*}
Then by Plancherel’s identity, we obtain for any \(r>d/2\)
\[\norm{\tilde{\mathcal{E}}^n_1(f)}_{H^r}\lesssim\abs{\gamma_n-1}\abs{\gamma_n}\tau\norm{f}_{H^{r+2}}.\]
Following Lemma \ref{bilinear_estimates} and Lemma \ref{inequalities}, it holds that
\[\norm{\tilde{\mathcal{E}}^n_{2,1}(f)}_{H^r}\lesssim\abs{\gamma_n-1}\abs{\gamma_n}\tau^2\norm{f}^3_{H^{r+2}},\quad \norm{\tilde{\mathcal{E}}^n_{2,2}(f)}_{H^r}\lesssim\abs{\gamma_n-1}\abs{\gamma_n}\tau^2\norm{f}^3_{H^{r+2}},\]
and then we have \[\norm{\tilde{\mathcal{E}}^n_2(f)}_{H^r}\lesssim\abs{\gamma_n-1}\abs{\gamma_n}\tau^2\norm{f}^3_{H^{r+2}}.\]

In a similar way, we obtain that
\[\norm{\tilde{\mathcal{E}}^n_3(f)}_{H^r}\lesssim\abs{\gamma_n-1}\abs{\gamma_n}\tau^2\norm{f}^3_{H^{r+2}},\quad \norm{\tilde{\mathcal{E}}^n_4(f)}_{H^r}\lesssim\abs{\gamma_n-1}\abs{\gamma_n}\tau^2\norm{f}^5_{H^{r+2}}.\]
Combining the above results with estimate \eqref{estimate_gamma_u} concludes 
\[\norm{\tilde{\mathcal{E}}^n(f)}_{H^r}\leq\tau^2 C(\norm{u_0}_{L^2},\norm{f}_{L^\infty(0,T;H^{r+2})}),\] 
namely, the RLRI-u \eqref{RLRI-u} is first order for initial value \(u_0\in H^{r+2},\ r>d/2\), which shows the order reduction in this case.

{\begin{remark}
		Analogously, if we employ~\eqref{RLRI-u}, where $\gamma_n$ is determined by solving 
		\(\mathcal{H}(u_\gamma^{n+1})=\mathcal{H}(u_\gamma^n),
		\)
		to construct low-regularity energy-preserving methods, an order reduction still occurs. Addressing this order reduction, however, lies beyond the scope of the present paper and is left for future investigation.
		\end{remark}}
We have presented a rigorous and comprehensive numerical analysis of the newly proposed methods RLRIs-v. In the next section, numerical experiments will be conducted to validate the above results.

\section{Numerical experiments}\label{ne}
This section is dedicated to examining the numerical behavior of new schemes RLRIs-v \eqref{RLRIs-v}. In all of the experiments, we fix \(d=1\) and our spatial discretisation is a spectral method with \(K=2^{12}\) (unless otherwise stated) Fourier modes. Our initial conditions are of following two types: 

\noindent1. \emph{Smooth initial conditions.} \(u_0\in C^\infty(\mathbb{T})\) is of the form
\begin{equation}\label{smooth_initial_data}
	u_0(x)= \cos(x)/(2+\sin(x));
\end{equation}

\noindent2. \emph{Low-regularity initial data.} \(u_0\in H^\theta\ (\theta>1/2)\) is defined as 

\begin{equation}\label{rough_data}
	u_0=\abs{\partial_{x,K}}^{-\theta}\mathcal{U}^N,\quad \big(\abs{\partial_{x,K}}^{-\theta}\big)_k:=\begin{cases}
		\abs{k}^{-\theta}\ &\text{
			if} \ \ k\neq0,\\
		0\quad &\text{
			if} \ \ k=0,
	\end{cases}
\end{equation}
where \(\mathcal{U}^K=\operatorname{rand}(K,1)+i\operatorname{rand}(K,1)\in \mathbb{C}^K,\) and \(\operatorname{rand}(K,1)\) returns \(K\) uniformly distributed random numbers between \(0\) and \(1\). 

Both choices of initial data are normalized in \(L^2\) as \(u_0 \mapsto u_0/\|u_0\|_{L^2}\).
In particular, the practical performance of the aforementioned methods in terms of
\begin{itemize}
	\item convergence under rough data;
	\item high computational efficiency;
	\item precise long-time conservation of the \(L^2\)-norm, even up to near machine precision,
\end{itemize}
will be tested to verify our theoretical results.

In practical numerical simulation, we choose RLRI1-v \eqref{RLRI1-v} for solving one dimensional cubic nonlinear Schr\"{o}dinger equations of type \eqref{eq_u_cubic_NLS} (i.e., \(p=1,\,d=1,\,\Delta=\partial_x^2\) in \eqref{general_NLS}) as an example of RLRIs-v. 
In addition, to demonstrate the broad applicability of RLRIs-v \eqref{RLRIs-v}, we apply this technique to low regularity integrator formulated in \cite{Kn19}:
\(v^{n+1}:=\varphi_{t_n}^\tau(v^n)\),
where 
$$	\varphi_{t}^\tau(f)=\e^{-it\partial_x^2}\Big(\e^{i\lambda\tau\abs{\e^{it\partial_x^2f}}^2}\
e^{it\partial_x^2}f-i\lambda\big(J_1(\e^{it\partial_x^2}f)+J_2(\e^{it\partial_x^2}f)\big)\Big)
$$
equipped with 
\begin{align*}
	J_1(g)=&\frac{i}{2}\Big[\e^{-i\tau\partial_x^2}\partial_x^{-1}\big((\e^{-i\tau\partial_x^2}\partial_x^{-1}\bar{g})(\e^{i\tau
		\partial_x^2}v^2)\big)-\partial_x^{-1}((\partial_x^{-1}\bar{g})g^2)\Big]\\
	&\,+\tau\hat{\bar{g}}_0g^2+\tau\widehat{(\abs{g}^2g)}_0-\tau\hat{\bar{g}}_0\widehat{(g^2)}_0,\\
	J_2(g)=&\frac{i}{2}\Big[\e^{-i\tau\partial_x^2}(\e^{i\tau\partial_x^2}\partial_x^{-1}g)^2-(\partial_x^{-1}v)^2\Big]\bar{g}+\tau\hat{g}_0(2g-\hat{g}_0)\bar{g},
\end{align*}
which satisfies Assumption \ref{Assumption} for any \(r > 1/2\) and \(m= 2\), \(\beta = 2\).
In the same construction steps as \eqref{RLRIs-v} by substituting \(\varphi_{\tilde{t}_n}^\tau-I\) for \(\widehat{\Phi}_{\tilde{t}_n}^\tau\) in \eqref{relax_general_v_LRI}--\eqref{RLRIs-v}, we obtain RLRI2-v as 
\begin{equation}\label{RLRI2-v}
	u_\gamma^{n+1}:=\e^{i\gamma_n\tau\partial_x^2}u_\gamma^n+\gamma_n\e^{i\tilde{t}_{n+1}\partial_x^2}\big(\varphi_{t_n}^\tau(\e^{-i\tilde{t}_n\partial_x^2}u_\gamma^n)-\e^{-i\tilde{t}_n\partial_x^2}u_\gamma^n\big), \quad n\geq 0.
\end{equation}
Meanwhile, we also provide numerical results for RLRI-u \eqref{RLRI-u} with \eqref{gamma_n_u} as an illustrative example of the order reduction that arises when the relaxation technique is applied to LRIs for \(u\) under rough data.

Furthermore, for the purpose of comparison, we consider four well-performing second order numerical schemes for the cubic NLS equation from the literature:

\noindent-- Strang splitting \cite{Lubich2008} referred to as `Strang' is in the following form:
\begin{equation*}
	\begin{aligned}
		u^{n+1/2}_-&=\e^{i\frac{\tau}{2}\partial_x^2}u^n_S,\ \
		u^{n+1/2}_+=\e^{-i\lambda\tau\abs{u^{n+1/2}_-}^2}u^{n+1/2}_-,\ \
		u^{n+1}_S=\e^{i\frac{\tau}{2}\partial_x^2}u^{n+1/2}_+.
	\end{aligned}
\end{equation*}

\noindent-- The \(L^2\)-norm preserving Lawson method introduced in \cite{Celledoni2008} (see Example 3.2), which we refer to as `Lawson', is given by
\begin{equation*}
		\begin{aligned}
			L^n&=-i\lambda\Big|\e^{i\frac{\tau}{2}\partial_x^2}u^n_L+\frac{\tau}{2}L^n\Big|^2\left(\e^{i\frac{\tau}{2}\partial_x^2}u^n_L+\frac{\tau}{2}L^n\right),\ \
			u^{n+1}_L=\e^{i\tau\partial_x^2}u^n_L+ \tau \e^{i\frac{\tau}{2}\partial_x^2}L^n.
		\end{aligned}
\end{equation*}

\noindent-- Symplectic low regularity integrator (called by `SLRI') $u^{n+1}:=\Psi^\tau_{Sym} (u^n)$ developed in \cite{Maierhofer2022} (see Example 3.17) reads as:
\begin{align*}
	\Psi^\tau_{Sym}(f)
	&=\e^{i\tau\partial_x^2} f
	- i\lambda \left[ \frac{i}{2} \partial_x^{-1} \left( \left( \e^{-i\tau \partial_x^2} \overline{\partial_x^{-1} g} \right)\left( \e^{i\tau \partial_x^2}  g^2 \right) \right) - \frac{i}{2} \e^{i\tau\partial_x^2} \partial_x^{-1} \left(\overline{ \partial_x^{-1} g} g^2 \right) \right] \\
	&\quad - i\lambda \e^{i\tau\partial_x^2} \left[ \frac{i}{2} \bar{g}\e^{-i\tau \partial_x^2} \left( \e^{i\tau \partial_x^2} \partial_x^{-1} g\right)^2 
	- \frac{i}{2}\bar{g} \left( \partial_x^{-1} g \right)^2 - \tau \abs{g}^2g \right] \\
	&\quad - i\lambda \tau \left[\frac{1}{2\pi} \int_{\mathbb{T}} \abs{g}^2g- \bar{\hat{g}}_0g^2\, \mathrm{d}x 
	+ \bar{\hat{g}}_0\e^{i\tau\partial_x^2} g^2+ 2 \hat{g}_0 \e^{i\tau\partial_x^2} \abs{g}^2- \left( \hat{g}_0 \right)^2 \e^{i\tau\partial_x^2} \bar{g} \right],
\end{align*}
with \(g=(f+\e^{-i\tau\partial_x^2}\Psi^\tau_{Sym}(f))/2.\)
{This scheme achieves second-order convergence in the \(H^1\) error for \(u_0 \in H^3\) and preserves the \(L^2\)-norm.}

\noindent-- Low regularity integrator \eqref{u_NLS} \cite{bruned_schratz_2022,ostermann2022second} without relaxation, denoted by `LRI1' later.

All numerical experiments were performed using MATLAB R2024a on a standard laptop computer. Since MATLAB is based on numerical computations and subject to rounding errors, the direct use of \(\norm{u_\gamma^n}_{L^2}=\norm{v_\gamma^n}_{L^2}\) may introduce additional errors which, although small, are not negligible. To prevent the accumulation of such errors in computation of \(L^2\)-norm, we further refined the computation of \(\gamma_n\) when $\norm{\widehat{\Phi}_{\tilde{t}_n}^\tau(v_\gamma^n)}_{L^2}\neq0$ in \eqref{gamma_n} as  
\begin{equation}\label{refined_gamma_n}
	\gamma_n= 
		1-\Big(\norm{\Phi_{\tilde{t}_n}^\tau(v_\gamma^n)}^2_{L^2}-\norm{u_0}^2_{L^2}\Big)\Big/\norm{\widehat{\Phi}_{\tilde{t}_n}^\tau(v_\gamma^n)}^2_{L^2},\quad \norm{\widehat{\Phi}_{\tilde{t}_n}^\tau(v_\gamma^n)}_{L^2}\neq0,
\end{equation}to better preserve the \(L^2\)-norm in numerical experiments. As a matter of fact, \eqref{refined_gamma_n} is equivalent to \eqref{gamma_n} and it can be proved by recursion.  Firstly, this result holds for \(n=0\) since \(v_\gamma^0=u_0\). Then using \(\gamma_0\) we obtain \(\norm{v_\gamma^1}_{L^2}=\norm{u_0}_{L^2}\) so that it also holds for \(n=1\). By repeatedly applying this process, we can arrive at the conclusion that \eqref{refined_gamma_n} holds for all \(n\geq 0\). In other words, this result also follows directly from the Theorem \ref{Conservation}. Naturally,  \eqref{gamma_n_u} can be treated in the same fashion by replacing \(\norm{u_\gamma^n}_{L^2}\) with \(\norm{u_0}_{L^2}\) in practical tests. Besides, in the simulations, to testify the fact \(\abs{\gamma_n-1}\lesssim \tau^{m-1}\) (\(m=2\) in the involved methods), we use \(d(\gamma_n)\) to denote the average distances between \(\gamma_n\) and \(1\), namely, \(d(\gamma_n) :=\frac{1}{N+1}\sum_{n=0}^{N} \abs{\gamma_n-1},\) which are used to verify the estimation of the relaxation coefficient \(\gamma_n\).

First of all, we investigate the convergence behavior of our proposed schemes compared with existing methods. To this end, we consider initial data of various levels of regularity \(u_0\in H^2, \,u_0\in H^3\) with \(\theta= 2, 3\) in \eqref{rough_data} and \(u_0\in C^\infty \) in \eqref{smooth_initial_data} and measure the \(H^1 \) error at time \(T = 1\) for different time steps \(\tau\). Our reference solutions were computed with \(K = 2^{12}\) Fourier modes and a time step \(\tau = 5\times 10^{-5}\) using the second order method in \eqref{u_NLS} from \cite{ostermann2022second}. The results are shown in Figure \ref{fig:rough_data_error}. We clearly observe that the new methods RLRIs-v maintain the original convergence rates of order two at those levels of regularity as per Theorem \ref{global_error}. Particularly, the error curves of RLRI1-v and LRI1 are nearly identical, suggesting that applying relaxation to LRIs of \(v\) does not adversely affect the error performance of LRIs. {Furthermore, it should be pointed out that the error constant of RLRI1-v is smaller than that of RLRI2-v, which is just due to differences between the baseline methods.} However, RLRI-u shows a clear reduction in convergence order and an increase in numerical error relative to LRI1 under rough data. {These observations confirm our convergence analysis in Section \ref{CA-SP}.}
\begin{figure}[htbp]
	\centering
	
	\subfigure[\(u_0\in H^2\), \(\theta=2\) in 
	\eqref{rough_data} ]{
		\includegraphics[width=0.32\textwidth, height=3.9cm]{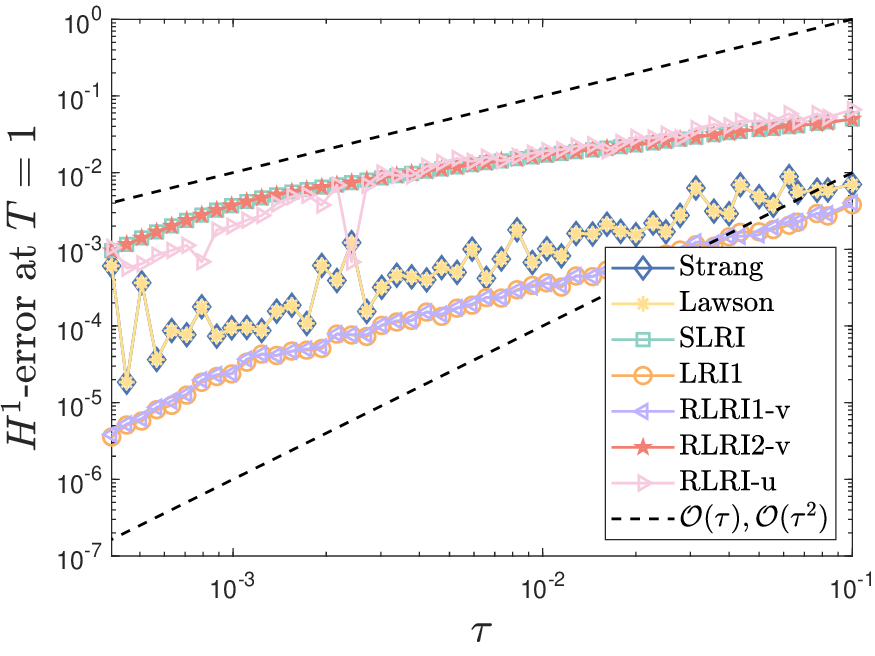}
	}
	\hspace{-1em}
	\subfigure[\(u_0\in H^3\), \(\theta=3\) in 
	\eqref{rough_data}  ]{
		\includegraphics[width=0.32\textwidth, height=3.9cm]{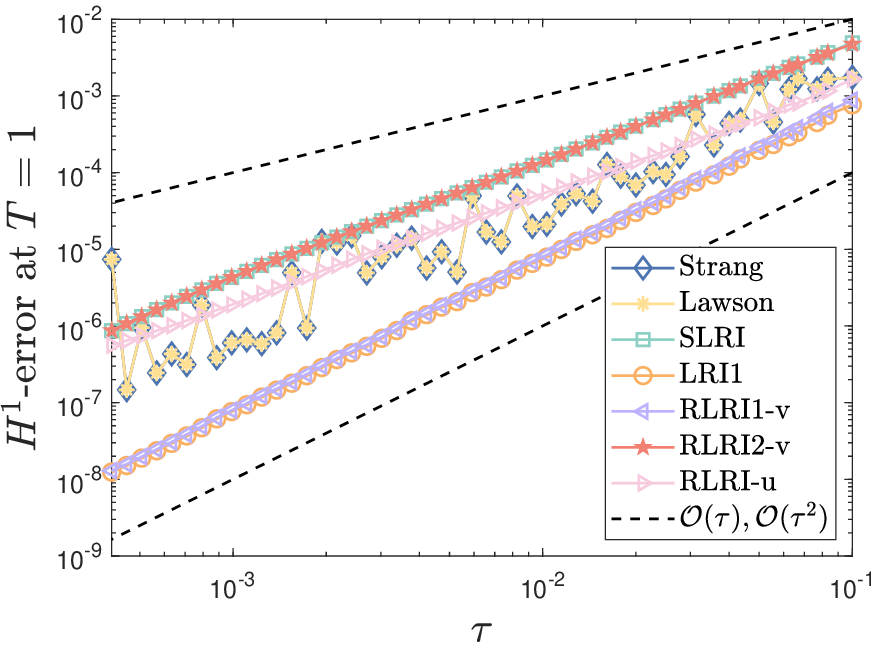}
	}
	\hspace{-1em}
	\subfigure[\(u_0\in C^\infty\) in 
	\eqref{smooth_initial_data}  ]{
		\includegraphics[width=0.32\textwidth, height=3.9cm]{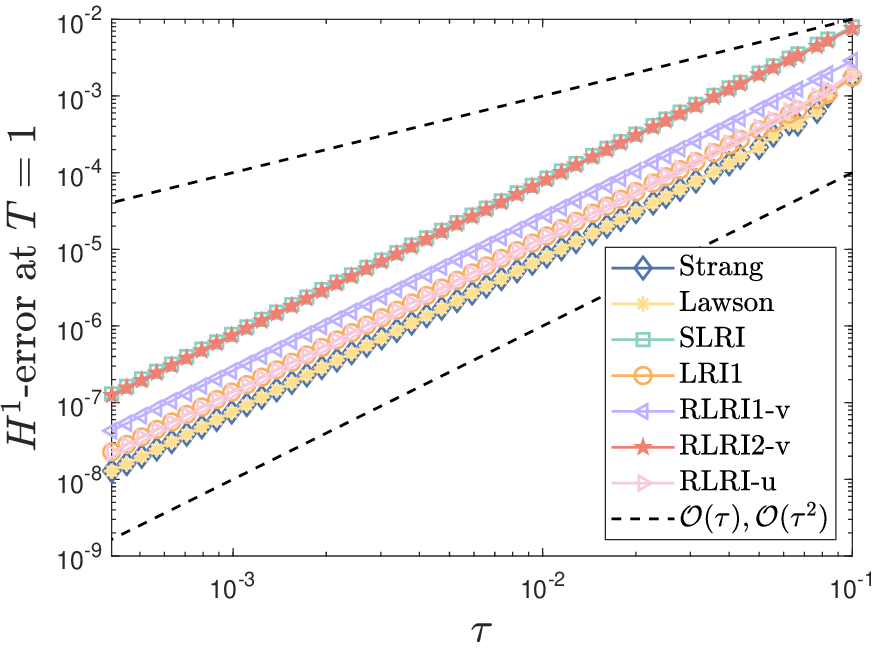}
	}
	
	\caption{\(H^1\)-error at \(T=1\) as a function of \(\tau\) for low-regularity and smooth initial data.}
	\label{fig:rough_data_error}
\end{figure}
\begin{figure}[htbp]
	\centering
	
	\subfigure[\(u_0\in H^2\), \(\theta=2\) in 
	\eqref{rough_data} ]{
		\includegraphics[width=0.32\textwidth, height=3.9cm]{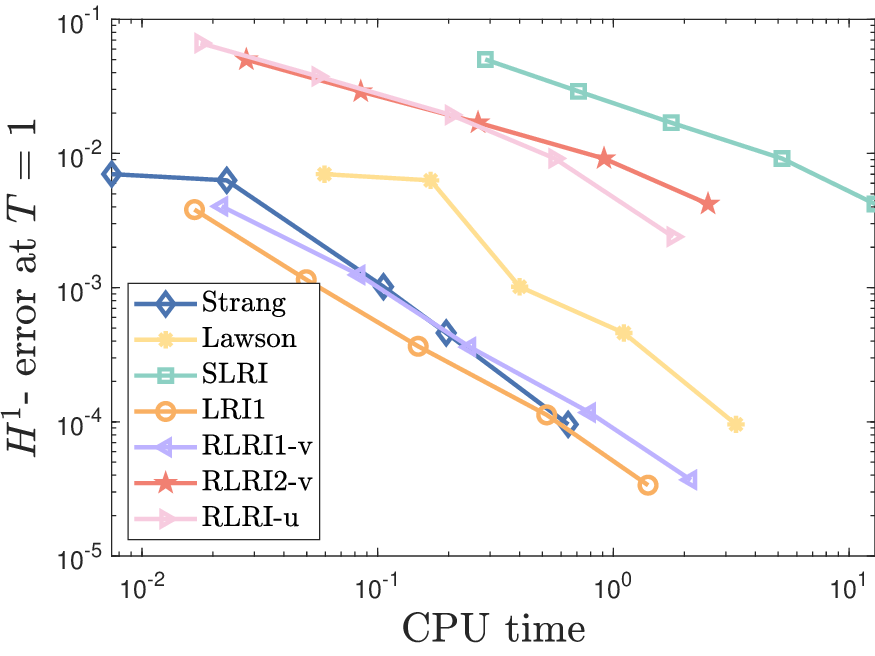}
	}
	\hspace{-1em}
	\subfigure[\(u_0\in H^3\), \(\theta=3\) in 
	\eqref{rough_data}  ]{
		\includegraphics[width=0.32\textwidth, height=3.9cm]{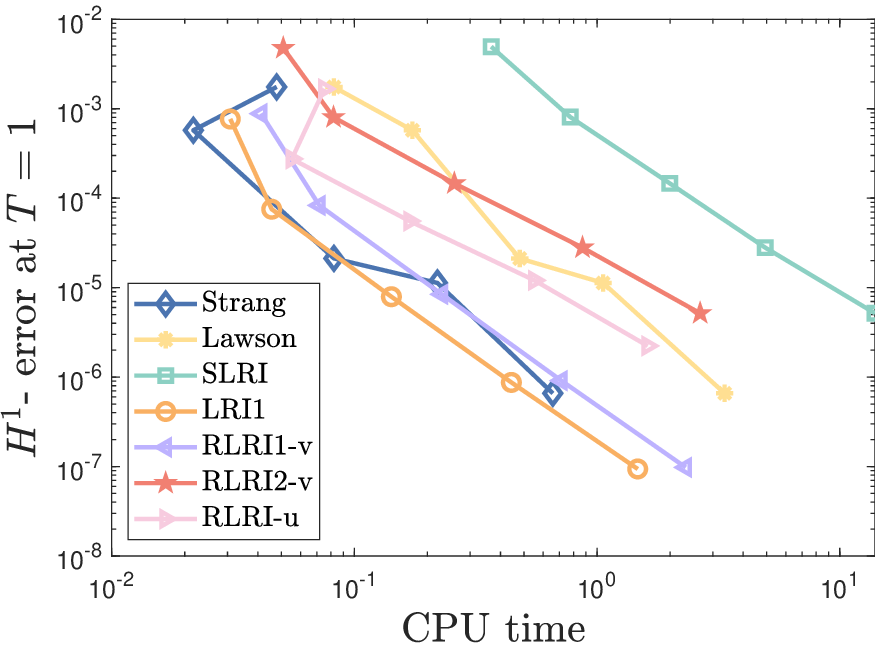}
	}
		\hspace{-1em}
	\subfigure[\(u_0 \in C^\infty\)  in 
\eqref{smooth_initial_data}  ]{
		\includegraphics[width=0.32\textwidth, height=3.9cm]{H3_cpu.eps}
	}
	
	\caption{\(H^1\)-error at \(T=1\) versus CPU time for low-regularity and smooth initial data.}
	\label{fig:rough_data_cpu}
\end{figure}

Thereafter, to show the high efficiency of the proposed methods, we compute the \(H^1\)-errors at time \(T= 1\) of these methods and record the running times for initial data of different levels of regularity. The $H^1$-errors versus CPU time of all the methods are presented in Figure \ref{fig:rough_data_cpu}. {Although the $H^1$-errors of SLRI and RLRI2-v are similar in Figure \ref{fig:rough_data_error}, all RLRIs-v are more efficient than the SLRI under the same regularity initial condition.} We note that the Strang splitting performs best for smooth initial data.

Subsequently, to verify the Assumption \ref{Assumption2} (i), which plays a crucial role in the analysis, we test the evolutions of the relaxation coefficients \(\gamma_n\) with \(\tau=0.01\) and \(d(\gamma_n)\) as a function of \(\tau\) for low-regularity and smooth initial data.
The numerical results are plotted in Figure \ref{fig:H2_gamma}, from which it is clearly observed that \(\gamma_n\) fluctuates around \(1\) and the order of \(d(\gamma_n)\) is approximately \(1\). {In addition, \(\gamma_n\) of RLRI-u is closer to \(1\) than \(\gamma_n\) of RLRIs-v.}

Finally, the relative errors in the \(L^2\)-norm are examined up to \(T = 5000\) with \(K = 2^{10}\) and \(\tau = 0.02\) for initial data \(u_0 \in H^2,\, H^3,\) and \(C^\infty\), as shown in Figure~\ref{fig:H2_normerr}.  
It is evident that all newly proposed methods outperform the classical schemes.  
The errors of RLRIs-v and RLRI-u reach the level of machine precision (around \(10^{-16}\)), and notably, do not grow over time, highlighting the excellent long-time performance of the proposed methods. On the other hand, the error curves appear discontinuous due to the disappearance of points where the relative \(L^2\)-norm errors are numerically zero.  
This indicates that the new methods are superior in both theoretical analysis and numerical performance.  
To further substantiate this, Table~\ref{stepwise_norm_error} reports the maximum and minimum of time-stepwise relative errors in the \(L^2\)-norm for all three types of initial data.



\begin{figure}[htbp]
	\centering
	
	\subfigure[\(\gamma_n\) with \(u_0\in H^2\)]{
		\includegraphics[width=0.32\textwidth, height=3.9cm]{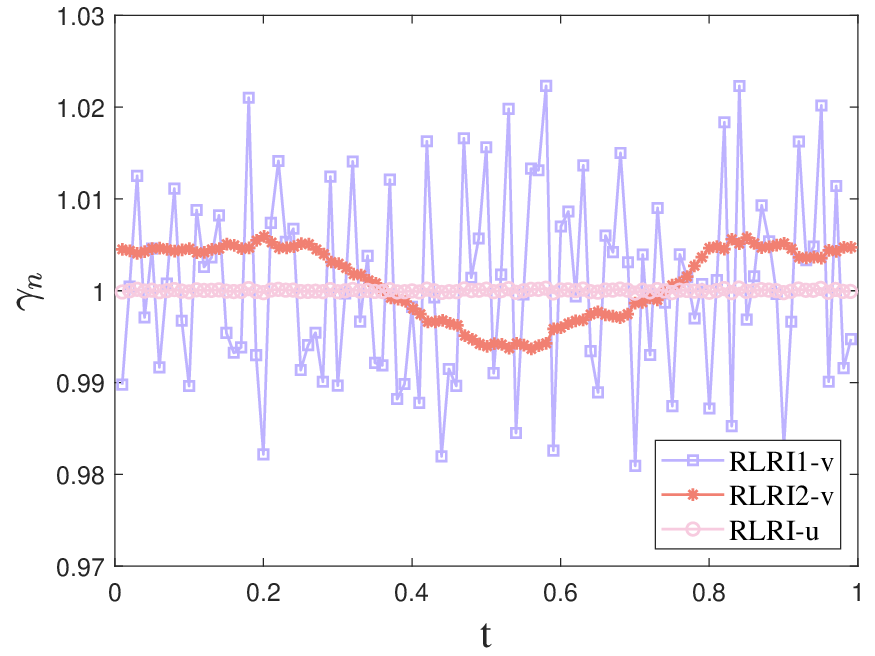}
	}
				\hspace{-1em}
\subfigure[\(\gamma_n\) with \(u_0\in H^3\)]{
		\includegraphics[width=0.32\textwidth, height=3.9cm]{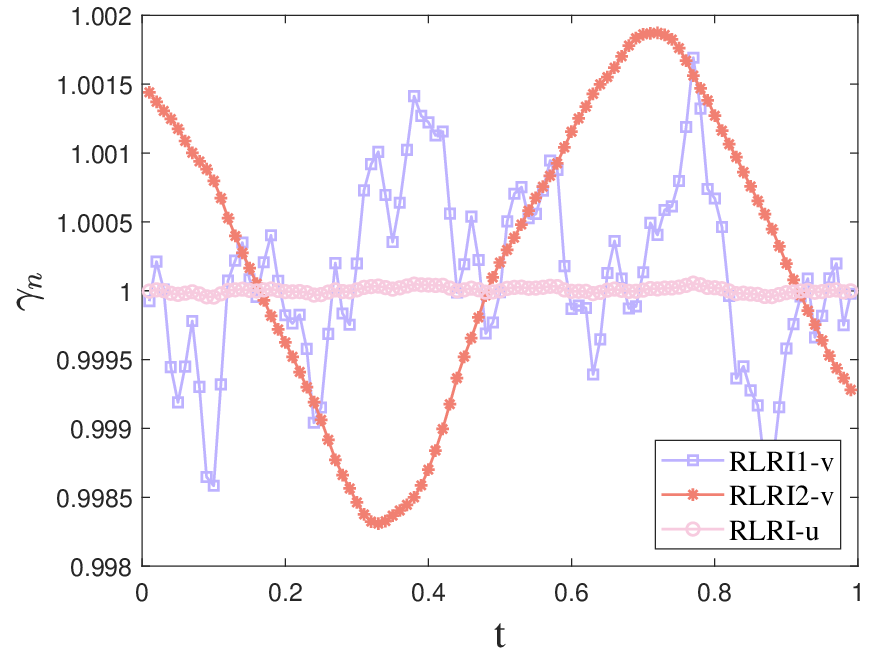}
	}
	\hspace{-1em}
	\subfigure[\(\gamma_n\) with \(u_0\in C^\infty\)]{
		\includegraphics[width=0.32\textwidth, height=3.9cm]{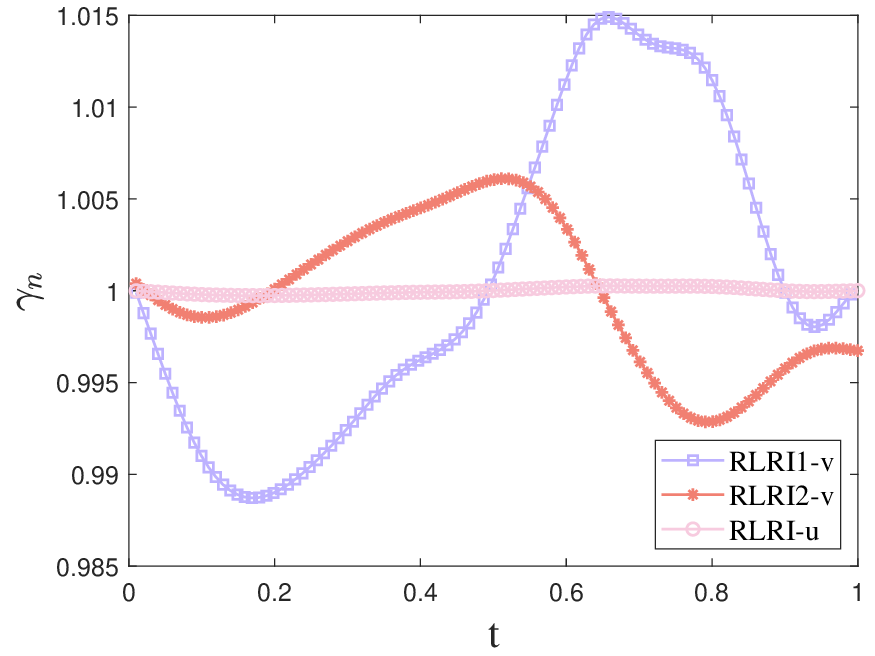}
	}
\hspace{-1em}
	\subfigure[\(d(\gamma_n)\)  with \(u_0\in H^2\)]{
		\includegraphics[width=0.32\textwidth, height=3.9cm]{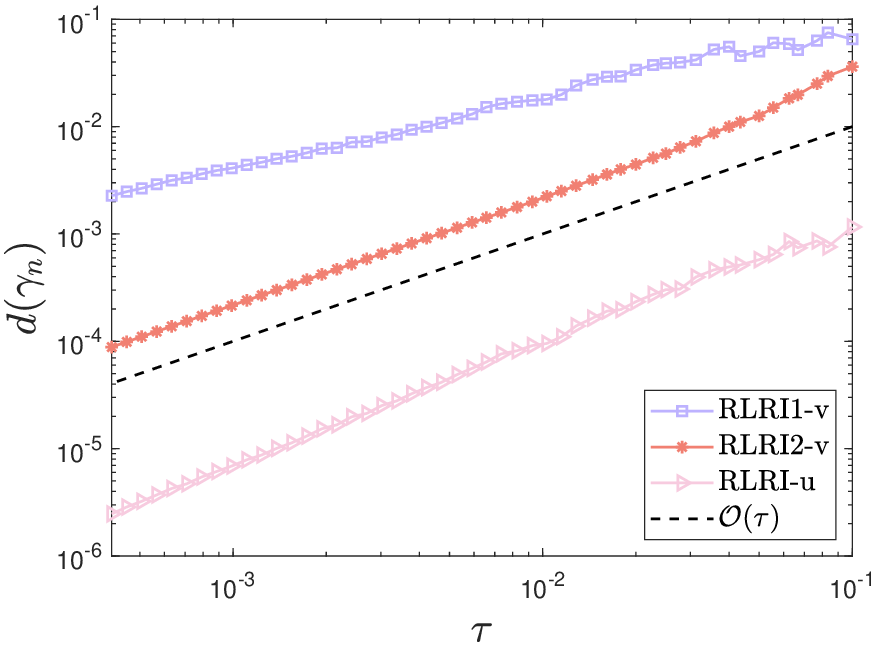}
	}
	\hspace{-1em}
	\subfigure[\(d(\gamma_n)\) with \(u_0\in H^3\)]{
		\includegraphics[width=0.32\textwidth, height=3.9cm]{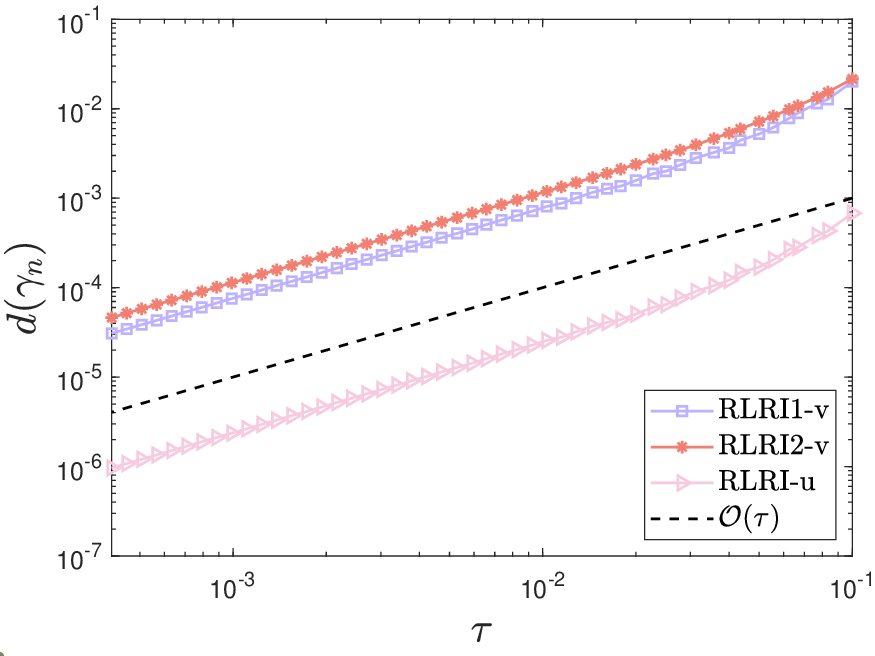}
	}
	\hspace{-1em}
	\subfigure[\(d(\gamma_n)\)  with \(u_0\in C^\infty\)]{
		\includegraphics[width=0.32\textwidth, height=3.9cm]{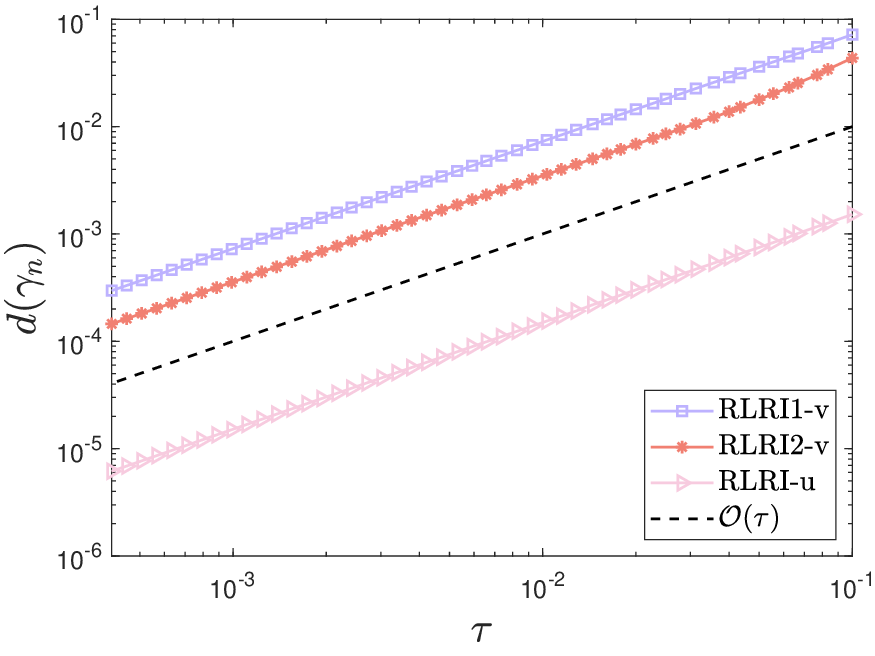}
	}
	\caption{Evolutions of the relaxation coefficients \(\gamma_n\)   with \(\tau=0.01\) and \(d(\gamma_n)\)  at \(T=1\) as a function of \(\tau\) for different initial data.}
	\label{fig:H2_gamma}
\end{figure}

%
%

%
%

\begin{figure}[htbp]
	\centering
	
	\subfigure[\(u_0\in H^2\), \(\theta=2\) in 
	\eqref{rough_data} ]{
		\includegraphics[width=0.32\textwidth, height=3.9cm]{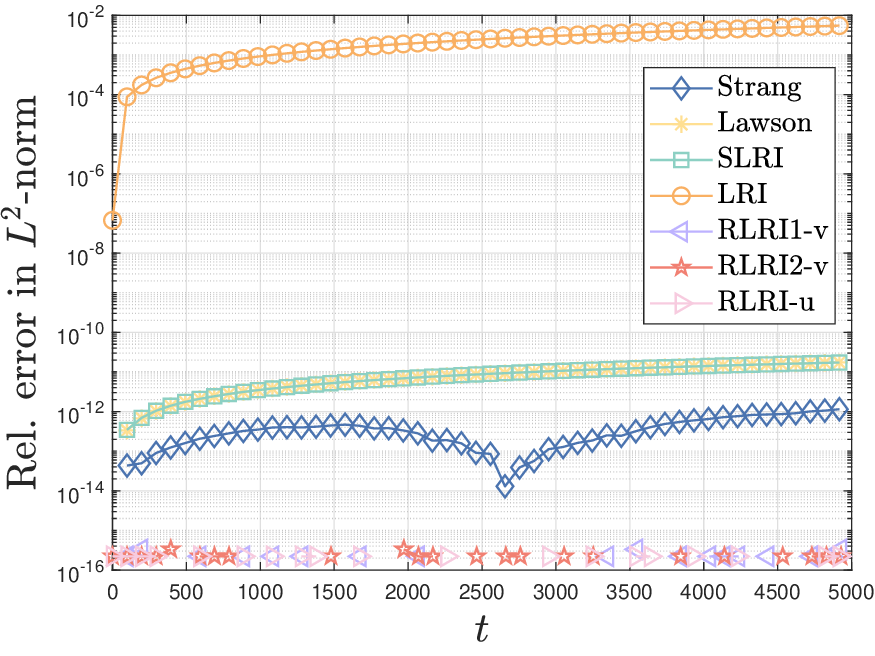}
	}
	\hspace{-1em}
	\subfigure[\(u_0\in H^3\), \(\theta=3\) in 
	\eqref{rough_data}  ]{
		\includegraphics[width=0.32\textwidth, height=3.9cm]{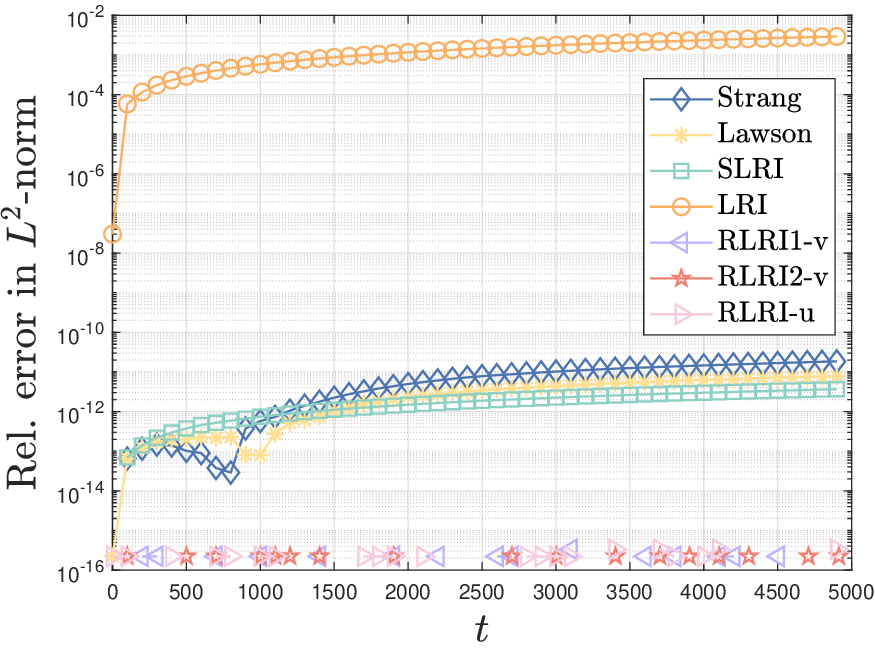}
	}
		\hspace{-1em}
	\subfigure[\(u_0 \in C^\infty\)  in 
\eqref{smooth_initial_data}  ]{
		\includegraphics[width=0.32\textwidth, height=3.9cm]{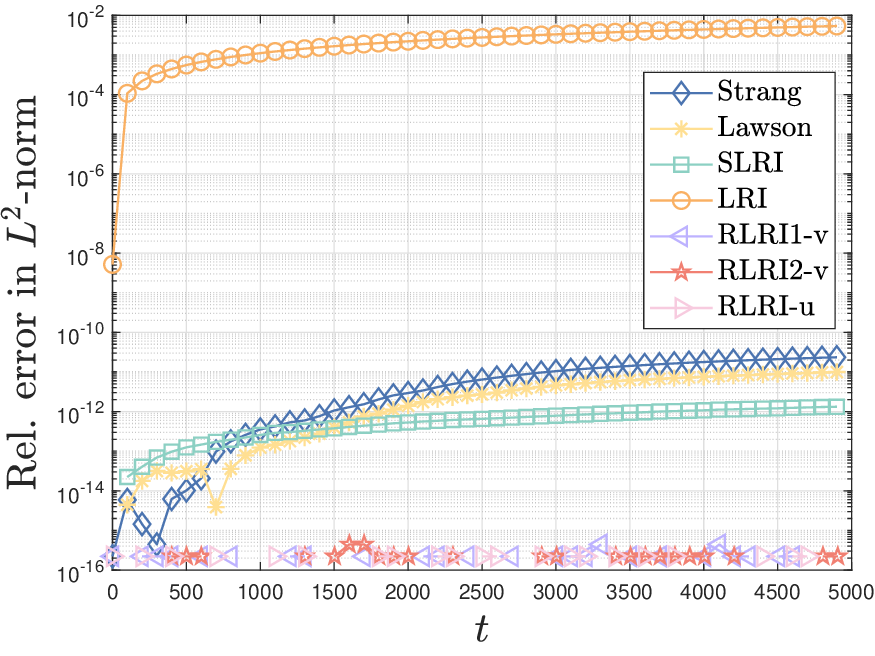}
	}
	
	\caption{Relative error in the \(L^2\)-norm for \(\tau=0.02\) and different initial data.}
	\label{fig:H2_normerr}
\end{figure}

%
%
\begin{table}[htbp]
	\centering
	\caption{Time stepwise relative error in the \(L^2\)-norm for \(\tau=0.02\) and \(T=5000\).}
	{\setlength{\tabcolsep}{3pt}\begin{tabular}{ccccccccc}
			\toprule
			$u_0\in H^2$ & $\max$ & $\min$ & $u_0\in H^3$ & $\max$ & $\min$ & $u_0\in C^\infty$ & $\max$ & $\min$ \\
			\midrule
			RLRI1-v& 4.44e-16&0&
			RLRI1-v&4.44e-16&0&
			RLRI1-v&6.66e-16&0\\
			RLRI2-v& 4.44e-16&0&
			RLRI2-v& 4.44e-16&0&
			RLRI2-v&4.44e-16&0\\
			RLRI-u&3.33e-16&0&
			RLRI-u&4.44e-16&0&
			RLRI-u&4.44e-16&0\\
			\bottomrule
	\end{tabular}}
	\label{stepwise_norm_error}
\end{table}

\section{Conclusion}\label{con}
In this work, we have introduced a novel class of fully explicit structure-preserving low-regularity exponential integrators for the nonlinear Schrödinger equation, constructed via a time-relaxation strategy in the twisted variable. The methods enforce exact mass conservation through an adaptive relaxation parameter while retaining optimal convergence under low-regularity assumptions. The proposed framework draws inspiration from time-relaxation technique in energy-preserving ODE schemes and successfully extends them to the low-regularity PDE setting. Our general and flexible approach enables application to a broad class of dispersive semilinear evolution equations beyond the NLS and supports future development of structure-preserving extensions.


\bibliographystyle{siamplain}
\bibliography{references}

\end{document}